\newtheorem{theorem}{Theorem}[section]
\newtheorem{lemma}[theorem]{Lemma}
\theoremstyle{definition}
\theoremstyle{remark}
\numberwithin{equation}{section}
\newcommand{\ba}{\begin{array}}
\newcommand{\ea}{\end{array}}
\newcommand{\f}{\frac}
\newcommand{\Om}{\Omega}
\newcommand{\la}{\lambda}
\newcommand{\ds}{\displaystyle}
\begin{document}
\date{}
\title{ \bf\large{Hopf bifurcation in a delayed reaction-diffusion-advection population model}\footnote{This research is supported by the National Natural Science Foundation of China (Nos. 11301111, 11571363, 11571364 and 11371111) and NSF grant DMS-1411476.}}
 \author{Shanshan Chen\textsuperscript{1}\footnote{Email: chenss@hit.edu.cn}, \ Yuan Lou\textsuperscript{2}\footnote{Email: lou@math.ohio-state.edu}, \  Junjie Wei\textsuperscript{1}\footnote{Corresponding Author, Email: weijj@hit.edu.cn} \
 \\
 {\small \textsuperscript{1} Department of Mathematics,  Harbin Institute of Technology,\hfill{\ }}\\
\ \ {\small Weihai, Shandong, 264209, P.R.China\hfill{\ }}\\
{\small \textsuperscript{2} Department of Mathematics, Ohio State University,\hfill{\ }}\\
\ \ {\small Columbus, OH 43210, USA\hfill{\ }}}
\maketitle

\begin{abstract}
{In this paper, we investigate a reaction-diffusion-advection model with time delay effect.
The stability/instability of the spatially nonhomogeneous positive steady state and the associated Hopf bifurcation are investigated when the given parameter of the model is near the principle eigenvalue of an elliptic operator. Our result implies that time delay can make the spatially nonhomogeneous positive steady state unstable for a reaction-diffusion-advection model, and the model can exhibit oscillatory
pattern through Hopf bifurcation.
}

 {\emph{Keywords}}: Reaction-diffusion; Advection;
Delay; Hopf bifurcation; Spatial Heterogeneity.
\end{abstract}

\newpage
\section {Introduction}

During the past thirty years, delay induced instability has been investigated extensively for homogeneous reaction-diffusion equations with delay effect, and the spatial homogeneous and nonhomogeneous periodic solutions can occur through Hopf bifurcation.
For models with the homogeneous Neumann boundary conditions, researchers were mainly concerned with the Hopf bifurcation near the constant positive equilibrium, see \cite{ChenSW,faria2000,Faria2001,gourley2002dynamics,gourley2004nonlocality,Hadeler,Hu,Lee,Sen,Shi} and the references therein. For models with the homogeneous Dirichlet boundary conditions, the positive equilibrium is always spatially nonhomogeneous.
Busenberg
and Huang \cite{busenberg1996stability} first studied the Hopf bifurcation near such spatially nonhomogeneous positive equilibrium, and they found that, for the following prototypical single population model,
\begin{equation}\label{pp}
\begin{cases}
\ds\frac{\partial u(x,t)}{\partial t} =d\Delta u(x,t)+\la
u(x,t)\left(1-u(x,t-\tau)\right),&x\in\Om,\ t>0,\\
u(x,t)=0,&  x\in\partial\Om,\ t>0,\\
\end{cases}
\end{equation}
time delay $\tau$ can make
the unique spatially nonhomogeneous
positive steady state unstable and induce Hopf bifurcation. Then, many authors investigated the Hopf bifurcation of models with the homogeneous Dirichlet boundary conditions, see \cite{HuR,su2009hopf,Su2011,yan2010stability,yan2012}.
Moreover, we refer to \cite {Chen2012,Chen2016,Guo2015,Guo2016} and the references therein for the Hopf bifurcation of models with the nonlocal delay effect and the homogenous Dirichlet boundary conditions.

In model \eqref{pp}, all the parameters are constant. However, due to the heterogeneity of the environment, the population may have a tendency to move up or down along the gradient of the habitats \cite{Belgacem1995}. Therefore, it is more realistic to have the following model,
\begin{equation}\label{spp}
\begin{cases}
\ds\frac{\partial u(x,t)}{\partial t} =\nabla\cdot[d\nabla u-au\nabla m]+
u(x,t)\left[m(x)-u(x,t-r)\right],&x\in\Om,\ t>0,\\
u(x,t)=0,&  x\in\partial\Om,\ t>0,\\
\end{cases}
\end{equation}
where $u(x,t)$ represents the population density at location
$x$ and time $t$, $d>0$ is the diffusion coefficient, time delay $r>0$ represents the maturation time, and $\Om$ is a bounded
domain in $\mathbb{R}^n$ ($1\le n\le 3$)
with a smooth boundary $\partial\Om$. Moreover, the intrinsic growth rate $m(x)$ is spatially dependent and may change sign, which means that, the intrinsic growth rate of the population is positive on favorable habitats and negative on unfavorable ones, and $a$ measures the tendency of the population to move up or down along the gradient of $m(x)$.
For $r=0$, Cantrell and Cosner \cite{cantrell1989,cantrell1991} investigated the effects of spatial heterogeneity on the dynamics of model \eqref{spp} for the case of $a=0$, and Belgacem and Cosner \cite{Belgacem1995} considered the case of $a\ne0$. We also refer to \cite{Cantrell1998,Chen2008,Cosner2003,He2016,Lou2006,LouC} and the references therein for the effects of spatial heterogeneity on single population and two competing populations models.

In this paper, we mainly investigate whether time delay $r$ can induce Hopf bifurcation for reaction-diffusion-advection model \eqref{spp}.
As in \cite{Belgacem1995}, letting $v=e^{(-a/d)m(x) }u$, $t=\tilde t/d$, dropping the tilde sign, and denoting $\la=1/d$, $\alpha=a/d$, $\tau=dr$, system \eqref{spp} can be transformed as follows:
\begin{equation}\label{delay}
\begin{cases}
\ds\frac{\partial v}{\partial t} =e^{-\alpha m(x)}\nabla\cdot[e^{\alpha m(x)}\nabla v]+
\la v\left[m(x)-e^{\alpha m(x)}v(x,t-\tau)\right],&x\in\Om,\ t>0,\\
v(x,t)=0,&  x\in\partial\Om,\ t>0.\\
\end{cases}
\end{equation}
Throughout the paper, unless otherwise specified, $m(x)$ satisfies the following assumption
\begin{enumerate}
\item[$\mathbf{(A_1)}$] $m(x)\in C^2(\overline\Omega)$, and $\max_{x\in\overline\Omega}m(x)>0$.
\end{enumerate}
The following eigenvalue problem
\begin{equation}\label{eigen-1}
\begin{cases}
-e^{-\alpha m(x)}\nabla\cdot[e^{\alpha m(x)}\nabla v]=-\Delta v-\alpha\nabla m\cdot\nabla v=\la  m(x) v,&x\in\Omega,\\
v(x)=0,&x\in\partial\Om,
\end{cases}
\end{equation}
is crucial to derive our main results.
It follows from \cite{Belgacem1995,CantrellB,LouC} that, under assumption $\mathbf{(A_1)}$, \eqref{eigen-1}
has a unique principal eigenvalue $\la_*>0$ admitting a strictly positive eigenfunction
$\phi\in C_0^{1+\delta}(\overline \Omega)$ for some $\delta\in(0,1)$.
Then, we can obtain the similar results as the case of spatial homogeneity \cite{busenberg1996stability,su2009hopf}:
for $\la\in(\la_*,\la^*]$, where $0<\la^*-\la_*\ll1$,
there exists a sequence of values
$\{\tau_n(\la)\}_{n=0}^{\infty}$, such that, when $\tau=\tau_n(\la)$, Eq. \eqref{delay} occurs
Hopf bifurcation at the unique spatially nonhomogeneous positive
steady state. Note that $\la=1/d$, where $d$ is the diffusion coefficient of model \eqref{spp}. Then, we see that there exists $d_*<1/\la_*$, such that
for $d\in[d_*,1/\la_*)$,
there exists a sequence of values
$\{r_n(d)\}_{n=0}^{\infty}$, such that Eq. \eqref{spp} occurs Hopf bifurcation when delay $r=r_n(d)$.

The rest of the paper is organized as follows. In Section 2, we
study the stability and Hopf bifurcation of the spatially
nonhomogeneous positive steady state for Eq. \eqref{delay}.
In Section 3, we derive an explicit formula, which can be used to determine
the direction of the Hopf bifurcation and the stability of the bifurcating periodic orbits. In Section 4, we give some remarks on the model with zero-flux boundary condition, and some numerical simulations are illustrated to support the obtained theoretical results. As in \cite{Chen2012,Chen2016},
throughout the paper, we also denote the spaces $X=H^2(\Om)\cap H^1_0(\Om)$,
$Y=L^2(\Om)$, $C=C([-\tau,0],Y)$, and $\mathcal{C}=C([-1,0],Y)$.
Moreover, we denote the complexification of a linear space $Z$ to be $Z_\mathbb{C}:= Z\oplus
iZ=\{x_1+ix_2|~x_1,x_2\in Z\}$, the domain
of a linear operator $L$ by $\mathscr{D}(L)$, the kernel of $L$ by $\mathscr{N}(L)$, and the range of $L$ by $\mathscr{R}(L)$.
For Hilbert space $Y_{\mathbb C}$, we use
the standard inner product $\langle u,v \rangle=\ds\int_{\Om} \overline
u(x) {v}(x) dx$.

\section {Stability and Hopf bifurcation}
In this section, we first consider the existence of positive steady states of Eq.
\eqref{delay}, which satisfy:
\begin{equation} \label{steady}
\begin{cases}
\ds \nabla\cdot[e^{\alpha m(x)}\nabla v]+
\la e^{\alpha m(x)}v\left[m(x)-e^{\alpha m(x)}v\right]=0, &
x\in\Om,\\
v(x)=0,&x\in\partial\Om.
\end{cases}
\end{equation}
Actually, it follows from \cite{Belgacem1995,LouC} that, for $\tau=0$, model \eqref{delay} has a unique
positive steady state which is global attractive among non-trivial nonnegative solutions if $\la>\la_*$, and the trivial steady state is global attractive if $\la\le\la_*$. Denote
\begin{equation}\label{LL}
L:=\nabla\cdot[e^{\alpha m(x)}\nabla] +\la_*e^{\alpha m(x)}m(x),
\end{equation}
where $\la_*>0$ is the unique principal eigenvalue of problem \eqref{eigen-1} admitting a strictly positive eigenfunction $\phi$.
Note that
\begin{equation*}
      X  =  \mathscr{N}\left(L\right)\oplus X_1,\;\;
      Y  = \mathscr{N}\left(L\right)\oplus Y_1,
\end{equation*}
 where
\begin{equation}\label{L}
\begin{split}
\mathscr{N}\left(L\right)=&\text{span}\{\phi\},\;\;
X_1=\left\{y\in X:\int_{\Om}\phi(x) y(x)dx=0\right\},\\
Y_1=&\mathscr{R}\left(L\right)=\left\{y\in Y:\int_{\Om}\phi(x)
y(x)dx=0\right\}.
\end{split}
\end{equation}
Then we can give a profile of the unique positive steady state near $\la_*$.
\begin{theorem}\label{Tsteady}
There exist $\la^*>\la_*$ and a continuously differential mapping
$\la\mapsto(\xi_{\la},\beta_{\la})$ from $[\la_*,\la^*]$ to
$X_1\times\mathbb{R}^+$ such that, for $\la\in(\la_*,\la^*]$,  the unique positive steady state of Eq. \eqref{delay} has the following form
\begin{equation}\label{steady1}
u_{\la}=\beta_{\la}(\la-\la_*)[\phi+(\la-\la_*)\xi_{\la}].
\end{equation}
Moreover, for $\la=\la_*$,
\begin{equation}\label{al}
\beta_{\la_*}
=\ds\f{\ds\int_{\Om}m(x)e^{\alpha m(x)}\phi^2(x)dx}{\la_*\ds\int_{\Om}e^{2\alpha m(x)}\phi^3(x)dx},
\end{equation}
and $\xi_{\la_*}\in X_1$ is the unique solution of the following equation
\begin{equation}\label{xi}
L\xi+\phi\left(m(x)e^{\alpha m(x)}-\la_*\beta_{\la_*}e^{2\alpha m(x)}\phi\right)=0,
\end{equation}
where $L$ is defined as in Eq. \eqref{LL}.
\end{theorem}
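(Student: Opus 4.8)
The plan is to realize $u_\la$ as a branch of steady states bifurcating from the trivial solution at $\la=\la_*$, exploiting the scaling already encoded in \eqref{steady1}. Writing $\eta=\la-\la_*$, I would substitute the ansatz $v=\beta\eta(\phi+\eta\xi)$ with $\beta\in\mathbb R$, $\xi\in X_1$ directly into the steady-state equation \eqref{steady}. Using $L\phi=0$ (where $L$ is as in \eqref{LL}), every surviving term carries a common factor $\beta\eta^2$, so for $\eta\neq0$, $\beta\neq0$ the equation \eqref{steady} is equivalent to $H(\xi,\beta,\eta)=0$, where
\begin{equation*}
H(\xi,\beta,\eta):=L\xi+e^{\alpha m}m\phi+\eta e^{\alpha m}m\xi-(\la_*+\eta)\beta e^{2\alpha m}(\phi+\eta\xi)^2
\end{equation*}
is a smooth map $H\colon X_1\times\mathbb R\times\mathbb R\to Y$.

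Next I would analyze $H=0$ at $\eta=0$, where it reduces to $L\xi=-\phi\bigl(me^{\alpha m}-\la_*\beta e^{2\alpha m}\phi\bigr)$. Since $L$ is self-adjoint on $Y$ with $\mathscr N(L)=\mathrm{span}\{\phi\}$ and $\mathscr R(L)=Y_1$ (see \eqref{L}), the Fredholm solvability condition is that the right-hand side be orthogonal to $\phi$; imposing $\int_\Om\phi\bigl(me^{\alpha m}\phi-\la_*\beta e^{2\alpha m}\phi^2\bigr)\,dx=0$ forces exactly the value $\beta_{\la_*}$ in \eqref{al}, whose denominator is nonzero because $\phi>0$. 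With $\beta=\beta_{\la_*}$ the right-hand side lies in $Y_1$, and invertibility of $L|_{X_1}\colon X_1\to Y_1$ yields the unique preimage $\xi_{\la_*}\in X_1$, namely the solution of \eqref{xi}. Positivity $\beta_{\la_*}>0$ follows by testing \eqref{eigen-1} against $\phi$, which gives $\int_\Om e^{\alpha m}m\phi^2=\la_*^{-1}\int_\Om e^{\alpha m}|\nabla\phi|^2>0$.

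To continue the branch to $\eta>0$, I would split $H=0$ into its $Y_1$-component $QH=0$ and its kernel component $\langle\phi,H\rangle=0$, with $Q$ the projection onto $Y_1$, and apply the implicit function theorem to $\mathcal H:=(QH,\langle\phi,H\rangle)\colon X_1\times\mathbb R\times\mathbb R\to Y_1\times\mathbb R$ at $(\xi_{\la_*},\beta_{\la_*},0)$. The crucial step is verifying that $D_{(\xi,\beta)}\mathcal H$ there is an isomorphism: one computes $D_{(\xi,\beta)}H[\hat\xi,\hat\beta]=L\hat\xi-\la_*\hat\beta e^{2\alpha m}\phi^2$ at $\eta=0$, so the kernel component returns $-\la_*\hat\beta\int_\Om e^{2\alpha m}\phi^3\,dx$, which forces $\hat\beta=0$ (the integral being strictly positive), after which $L\hat\xi=0$ and injectivity of $L|_{X_1}$ give $\hat\xi=0$; surjectivity follows by solving the same triangular system. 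The implicit function theorem then produces $\la^*>\la_*$ and a $C^1$ (indeed smooth) map $\la\mapsto(\xi_\la,\beta_\la)$ on $[\la_*,\la^*]$ with $\mathcal H\equiv0$, hence $H\equiv0$; by continuity $\beta_\la>0$ after shrinking $\la^*$, so the map lands in $X_1\times\mathbb R^+$, and $v=\beta_\la(\la-\la_*)[\phi+(\la-\la_*)\xi_\la]$ solves \eqref{steady}.

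Finally I would identify this branch with the unique positive steady state: for small $\eta>0$ the leading term $\beta_{\la_*}\phi$ dominates the lower-order correction $\eta\xi_\la$, so $v>0$, and by the uniqueness for $\la>\la_*$ established in \cite{Belgacem1995,LouC}, $v$ must coincide with $u_\la$. The step I expect to be most delicate is precisely this positivity/identification argument. For $n=2,3$ the embedding $X\hookrightarrow C^1(\cOm)$ fails, so one cannot conclude $\phi+\eta\xi_\la>0$ up to the boundary by a naive $C^1$ perturbation of $\phi$, since both $\phi$ and the correction vanish on $\partial\Om$; controlling the sign near $\partial\Om$ requires either a Hopf-type boundary comparison or, more cleanly, invoking the local uniqueness of the bifurcating branch from the Crandall--Rabinowitz picture to exclude other small solutions. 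The Lyapunov--Schmidt reduction and the isomorphism check, by contrast, are routine once the non-degeneracy $\int_\Om e^{2\alpha m}\phi^3\,dx\neq0$ is noted.
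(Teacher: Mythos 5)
Your proposal is correct and follows essentially the same route as the paper: the same ansatz $u=\beta(\la-\la_*)[\phi+(\la-\la_*)\xi]$, the same determination of $(\beta_{\la_*},\xi_{\la_*})$ via the Fredholm alternative for $L$, and the same implicit-function-theorem step, with your splitting into $QH=0$ and $\langle\phi,H\rangle=0$ being just the explicit verification of the bijectivity of $D_{(\xi,\beta)}m$ from $X_1\times\mathbb{R}$ onto $Y=\mathscr{N}(L)\oplus Y_1$ that the paper merely asserts. Your closing worry about positivity up to the boundary is well taken (the paper simply asserts positivity of the constructed solution), but it is resolved by elliptic bootstrapping: $\xi_\la$ is in fact bounded in $C^{1+\gamma}(\cOm)$, so the Hopf boundary lemma applied to $\phi$ gives $\phi+(\la-\la_*)\xi_\la>0$ for $\la-\la_*$ small, and uniqueness of the positive steady state then identifies the branch with $u_\la$.
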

\begin{proof}
Noticing that
\begin{equation}\label{pos}
    \la_*\int_\omega m(x)e^{\alpha m(x)} \phi^2(x)dx=\int_\Omega e^{\alpha m(x)}|\nabla \phi(x)|^2dx>0,
\end{equation}
we see that $\beta_{\la_*}$ is well defined and positive. It follows that $$\phi\left(m(x)e^{\alpha m(x)}-\la_*\beta_{\la_*}e^{2\alpha m(x)}\phi\right)\in\mathscr{R}(L)=Y_1,$$ and hence $\xi_{\la_*}$ is
 well defined. Substituting $u=\beta(\la-\la_*)\left[\phi+(\la-\la_*)\xi\right]$ into Eq. \eqref{steady}, we see that $(\beta,\xi)$ satisfies
\begin{equation*}
    m(\xi,\beta,\la)=
L\xi+m(x)e^{\alpha m(x)}\left[\phi+(\la-\la_*)\xi\right]\\
        -\la\beta e^{2\alpha m(x)}[\phi+(\la-\la_*)\xi]^2=0.
\end{equation*}
Noticing that $\Omega$ is a bounded domain in $\mathbb{R}^n (1\le n\le 3)$ with a smooth boundary $\partial\Omega$, we see that $X_1$ is compactly imbedded into $C^\gamma(\overline \Omega)$ for some $\gamma\in(0,1)$, and hence $m(\xi,\beta,\la)$ is a function from $X_1\times \mathbb{R}^2$ to $Y$.
It follows from Eqs. \eqref{al} and \eqref{xi} that
$m(\xi_{\la_*},\beta_{\la_*}, \la_*)=0$, and
$$D_{(\xi,\beta)}m(\xi_{\la_*},\beta_{\la_*},\la_*)[\eta,\epsilon]=L\eta-\la_*\epsilon
e^{2\alpha m(x)}\phi^2,$$ where
$D_{(\xi,\beta)}m(\xi_{\la_*},\beta_{\la_*},\la_*)[\eta,\epsilon]$
is the Fr\'echet derivative of $m$ with respect to $(\xi,\beta)$ at $(\xi_{\la_*},\beta_{\la_*},\la_*)$.
One can easily check that $D_{(\xi,\beta)}m(\xi_{\la_*},\beta_{\la_*},\la_*)$ is
a bijection from $X_1\times\mathbb{R}$ to $Y$. Then, it follows from the implicit
function theorem that there exist $\la^*>\la_*$ and a
continuously differentiable mapping
$\la\mapsto(\xi_\la,\beta_\la)\in X_1\times\mathbb{R}^+$ such that
$$
m(\xi_\la,\beta_\la,\la)=0,\;\; \la\in[\la_*,\la^*].
$$
Therefore,
$\beta_\la(\la-\la_*)[\phi+(\la-\la_*)\xi_\la]$ is a positive solution of Eq.
\eqref{steady}.
\end{proof}
Linearizing system \eqref{delay} at $u_\la$, we have
\begin{equation}
\label{linear}\begin{cases}\ds\frac{\partial v}{\partial t} =
e^{-\alpha m(x)}\nabla\cdot[e^{\alpha m(x)}\nabla v]+
\la
\left[m(x)-e^{\alpha m(x)}u_{\la}\right]v\\~~~~-\la e^{\alpha m(x)}
u_{\la}v(x,t-\tau),& x\in\Om,\; t>0,\\
v(x,t)=0,&x\in\partial\Om,\; t>0.
\end{cases}
\end{equation}
It follows from \cite{wu1996theory} that the solution semigroup of
Eq. \eqref{linear} has the infinitesimal generator $A_\tau(\la)$
satisfying
\begin{equation}\label{Ataula}A_\tau(\la) \Psi=\dot\Psi,\end{equation}
where
\begin{equation*}\begin{split}
 \mathscr{D}(A_\tau(\la)) = \{ &\Psi\in C_\mathbb{C}
\cap C^1_\mathbb{C}:\ \Psi(0)\in X_{\mathbb{C}},\dot\Psi(0)=e^{-\alpha m(x)}\nabla\cdot[e^{\alpha m(x)}\nabla \Psi(0)]\\&+\la \left[m(x)-e^{\alpha m(x)}u_{\la}\right]\Psi(0)-\la e^{\alpha m(x)} u_{\la}\Psi(-\tau)  \},
\end{split}\end{equation*}
and $
C^1_\mathbb{C}=C^1([-\tau,0],Y_\mathbb{C})$. Moreover, $\mu\in\mathbb{C}$ is an eigenvalue of $A_\tau(\la)$, if and only if there exists $\psi(\ne0)\in
X_{\mathbb{C}}$ such that $\Delta(\la,\mu,\tau)\psi=0$,
where
\begin{equation}\label{triangle}
\begin{split}
&\Delta(\la,\mu,\tau)\psi:\\
=&e^{-\alpha m(x)}\nabla\cdot[e^{\alpha m(x)}\nabla \psi]+\la \left[m(x)-e^{\alpha m(x)}u_{\la}\right]\psi-\la e^{\alpha m(x)} u_{\la}\psi e^{-\mu\tau}-\mu\psi.
\end{split}
\end{equation}
We will show that the eigenvalues of $A_\tau(\la)$ could pass through the imaginary axis when time delay $\tau$ increases.
Actually, one can easily check that $A_\tau(\la)$ has a purely imaginary eigenvalue $\mu=i\nu\
(\nu>0)$ for some $\tau\ge0$, if and only if
\begin{equation}\label{eigen}
e^{-\alpha m(x)}\nabla\cdot[e^{\alpha m(x)}\nabla \psi]+\la \left[m(x)-e^{\alpha m(x)}u_{\la}\right]\psi-\la e^{\alpha m(x)} u_{\la}\psi e^{-i\theta}-i\nu\psi=0
\end{equation}
is solvable for some value of $\nu>0$, $\theta\in[0,2\pi)$, and $\psi(\ne 0)\in X_{\mathbb{C}}$.
First, we
give the following estimates for solutions of \eqref{eigen}.
\begin{lemma}\label{nu}
If $(\nu_\la,\theta_\la,\psi_\la)$ solves Eq.
\eqref{eigen} with $\nu_\la>0$, $\theta_\la\in[0,2\pi)$, and $\psi_\la( \ne 0) \in X_{\mathbb{C}}$, then
\begin{equation}\label{nues}
\nu_\la\int_\Omega e^{\alpha m(x)}|\psi_\la|^2 dx=\la\sin\theta_\la\int_\Omega e^{2\alpha m(x)}u_\la|\psi_\la|^2dx,
\end{equation}
and $\ds
\frac{\nu_\la}{\la-\la_*}$ is bounded for
$\la\in(\la_*,\la^*]$.\end{lemma}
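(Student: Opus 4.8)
The plan is to derive the identity \eqref{nues} by a weighted energy estimate: test the eigenvalue equation \eqref{eigen} against $e^{\alpha m(x)}\overline{\psi_\la}$, integrate over $\Om$, and take the imaginary part. The structural fact that makes this work is that the weight $e^{\alpha m(x)}$ symmetrizes the advection--diffusion operator. Indeed, integrating by parts and using that $\psi_\la\in X_{\mathbb C}$ vanishes on $\partial\Om$ (so the boundary term drops out), one has
$$\ds\int_\Om e^{\alpha m(x)}\overline{\psi_\la}\,e^{-\alpha m(x)}\nabla\cdot[e^{\alpha m(x)}\nabla\psi_\la]\,dx=\int_\Om\overline{\psi_\la}\,\nabla\cdot[e^{\alpha m(x)}\nabla\psi_\la]\,dx=-\int_\Om e^{\alpha m(x)}|\nabla\psi_\la|^2\,dx.$$

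Applying this to the full equation \eqref{eigen} yields
$$-\int_\Om e^{\alpha m(x)}|\nabla\psi_\la|^2dx+\la\int_\Om e^{\alpha m(x)}\big[m(x)-e^{\alpha m(x)}u_\la\big]|\psi_\la|^2dx-\la e^{-i\theta_\la}\int_\Om e^{2\alpha m(x)}u_\la|\psi_\la|^2dx-i\nu_\la\int_\Om e^{\alpha m(x)}|\psi_\la|^2dx=0.$$
Since $m(x)$, $u_\la$, and the three integrals with real integrands are all real, the only imaginary contributions come from $e^{-i\theta_\la}=\cos\theta_\la-i\sin\theta_\la$ and from $-i\nu_\la$. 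Equating the imaginary part to zero produces precisely
$$\nu_\la\int_\Om e^{\alpha m(x)}|\psi_\la|^2dx=\la\sin\theta_\la\int_\Om e^{2\alpha m(x)}u_\la|\psi_\la|^2dx,$$
which is \eqref{nues}.

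For the boundedness of $\nu_\la/(\la-\la_*)$, I would solve \eqref{nues} for $\nu_\la$ and use $\sin\theta_\la\le 1$ together with $u_\la>0$ to write
$$\ds\frac{\nu_\la}{\la-\la_*}\le\la\,\frac{\ds\int_\Om e^{2\alpha m(x)}\frac{u_\la}{\la-\la_*}|\psi_\la|^2dx}{\ds\int_\Om e^{\alpha m(x)}|\psi_\la|^2dx}.$$
Here is where Theorem \ref{Tsteady} enters: the representation $u_\la=\beta_\la(\la-\la_*)[\phi+(\la-\la_*)\xi_\la]$ gives $u_\la/(\la-\la_*)=\beta_\la[\phi+(\la-\la_*)\xi_\la]$, and since $\la\mapsto(\xi_\la,\beta_\la)$ is continuous on the compact interval $[\la_*,\la^*]$ while $X_1$ embeds compactly into $C^\gamma(\overline\Om)$, the function $e^{\alpha m(x)}u_\la/(\la-\la_*)$ is bounded in $L^\infty(\Om)$ by a constant $M$ independent of $\la\in(\la_*,\la^*]$. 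Consequently $e^{2\alpha m(x)}u_\la/(\la-\la_*)\le M\,e^{\alpha m(x)}$ pointwise, the numerator is at most $M\int_\Om e^{\alpha m(x)}|\psi_\la|^2dx$, and the ratio above is bounded by $\la^* M$, yielding the claim.

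The argument is essentially routine, so I expect no deep obstacle; the two points requiring care are recognizing that multiplication by $e^{\alpha m(x)}$ is exactly what renders the operator self-adjoint and leaves only two genuinely complex terms, and controlling $u_\la/(\la-\la_*)$ \emph{uniformly} in $\la$. The latter must be extracted from the profile in Theorem \ref{Tsteady} and the compact embedding $X_1\hookrightarrow C^\gamma(\overline\Om)$, rather than estimated pointwise for each fixed $\la$, since uniformity over $(\la_*,\la^*]$ is precisely what the conclusion demands.
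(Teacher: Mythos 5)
Your proposal is correct and follows essentially the same route as the paper: multiply \eqref{eigen} by $e^{\alpha m(x)}\overline{\psi_\la}$, integrate by parts (the weight symmetrizes the operator, making $\langle\psi_\la,\nabla\cdot[e^{\alpha m(x)}\nabla\psi_\la]\rangle$ real), take the imaginary part to get \eqref{nues}, and then bound $u_\la/(\la-\la_*)$ uniformly via the profile $u_\la=\beta_\la(\la-\la_*)[\phi+(\la-\la_*)\xi_\la]$ from Theorem \ref{Tsteady} together with the continuity of $\la\mapsto(\|\xi_\la\|_\infty,\beta_\la)$. No gaps.
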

\begin{proof}
Substituting $(\nu_\la,\theta_\la,\psi_\la)$ into Eq. \eqref{eigen}, multiplying \eqref{eigen} by $e^{\alpha m(x)}\overline\psi_\la$, and integrating the result over $\Omega$, we have
\begin{equation*}
\begin{split}
&\left\langle \psi_\la, \nabla\cdot[e^{\alpha m(x)}\nabla \psi_\la]\right\rangle+\la\int_\Omega\left[m(x)e^{\alpha m(x)}-e^{2\alpha m(x)}u_{\la}\right]|\psi_\la|^2dx\\
&-\la\int_\Omega e^{2\alpha m(x)}u_\la|\psi_\la|^2dx e^{-i\theta_\la}-i\nu_\la\int_\Omega e^{\alpha m(x)}|\psi_\la|^2 dx=0.
\end{split}
\end{equation*}
Noticing that
$$\left\langle \psi_\la, \nabla\cdot[e^{\alpha m(x)}\nabla \psi_\la]\right\rangle=-\int_\Omega e^{\alpha m(x)}|\nabla\psi_\la|^2dx<0,$$ we see that Eq. \eqref{nues} holds.
Therefore,
$$\ds\f{\nu_\la}{\la-\la_*}=\ds\f{\la\sin\theta_\la \int_\Omega e^{2\alpha m(x)}u_\la|\psi_\la|^2dx}{(\la-\la_*)\int_\Omega e^{\alpha m(x)}|\psi_\la|^2dx}\le \la |\beta_\la|e^{\alpha\max_\Omega m(x)}\left[\|\phi\|_\infty+(\la-\la_*)\|\xi_\la\|_\infty\right].$$
It follows from the continuity of
$\la\mapsto(\|\xi_{\la}\|_{\infty},\beta_{\la})$ that $\ds
\frac{\nu_\la}{\la-\la_*}$ is bounded for $\la\in(\la_*,\la^*]$.
\end{proof}
The following result is similar to Lemma 2.3 of \cite{busenberg1996stability} and
we omit the proof here.
\begin{lemma}\label{lem21}
If $z\in X_{\mathbb C}$ and $\langle\phi,z\rangle=0,$ then $|
\langle Lz,z\rangle|\geq
\la_2\|z\|^2_{Y_{\mathbb{C}}}$, where $\la_2$ is the second
eigenvalue of operator $-L$.
\end{lemma}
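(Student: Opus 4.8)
The plan is to exploit the self-adjointness of $L$ on $Y$ together with the spectral decomposition of $-L$. First I would check that $L$ is self-adjoint with respect to the inner product $\langle\cdot,\cdot\rangle$ on $Y$: the divergence-form term $\nabla\cdot[e^{\alpha m(x)}\nabla\,\cdot\,]$ is symmetric on $X$ by integration by parts, since the boundary contributions vanish under the homogeneous Dirichlet condition, and the zeroth-order term $\la_*e^{\alpha m(x)}m(x)$ is multiplication by a real function and hence symmetric. Because $\Om$ is bounded and the operator is uniformly elliptic, $-L$ has compact resolvent, so its spectrum is real and discrete, consisting of eigenvalues $\mu_1\le\mu_2\le\cdots\to+\infty$ with a complete orthonormal family of real eigenfunctions $\{e_k\}_{k\ge1}$ in $Y$.

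Next I would locate the bottom of the spectrum. Taking $\la=\la_*$ in \eqref{eigen-1} and multiplying by $e^{\alpha m(x)}$ shows that the principal eigenfunction satisfies $L\phi=0$, i.e. $-L\phi=0\cdot\phi$. Since $\phi>0$ in $\Om$, the Krein--Rutman theorem (equivalently the strong maximum principle) identifies $0$ as the \emph{smallest} eigenvalue of $-L$; thus $\mu_1=0$ with $e_1=\phi/\|\phi\|_{Y}$, and $\mu_2=\la_2>0$ is precisely the second eigenvalue named in the statement.

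The estimate then comes out of the spectral gap above $0$. The real eigenfunctions $\{e_k\}$ also form a complete orthonormal basis of $Y_{\mathbb C}$, so for $z\in X_{\mathbb C}$ I would expand $z=\sum_{k\ge1}c_ke_k$ with $c_k\in\C$. The hypothesis $\langle\phi,z\rangle=0$ forces $c_1=0$, since $e_1$ is a scalar multiple of $\phi$. Using that the $\mu_k$ and $e_k$ are real, a direct computation gives
\begin{equation*}
\langle -Lz,z\rangle=\sum_{k\ge2}\mu_k|c_k|^2\ge\la_2\sum_{k\ge2}|c_k|^2=\la_2\|z\|^2_{Y_{\mathbb C}},
\end{equation*}
and in particular $\langle Lz,z\rangle$ is real. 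Consequently $|\langle Lz,z\rangle|=\langle -Lz,z\rangle\ge\la_2\|z\|^2_{Y_{\mathbb C}}$, which is the desired inequality.

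The only genuine obstacle is the second step: verifying that the eigenvalue $0$ attached to $\phi$ is the principal (smallest) eigenvalue of $-L$, so that the entire orthogonal complement of $\phi$ sees eigenvalues at least $\la_2$. This is exactly where the positivity of $\phi$ is used, through the Krein--Rutman/maximum-principle characterization of the principal eigenvalue. Everything else---the self-adjointness, the orthonormal expansion, and the Rayleigh-quotient bound---is routine.
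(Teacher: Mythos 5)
Your proof is correct: the paper itself omits the argument, referring instead to Lemma 2.3 of Busenberg--Huang, and your spectral-decomposition proof is exactly the standard argument behind that reference. The one step you rightly single out---that the positivity of $\phi$ forces $0$ to be the principal (smallest) eigenvalue of the self-adjoint operator $-L$, so that the orthogonal complement of $\phi$ carries the spectral gap $\la_2$---is handled correctly.
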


Now, for $\la\in(\la_*,\la^*]$, letting
\begin{equation}
\label{eigen2}
\begin{split}
&\psi= r\phi+(\la-\la_*)z,\;\;\;\; z\in (X_1)_{\mathbb{C}},\ \ \ \ r\geq0, \\
 &\|\psi\|^2_{Y_{\mathbb{C}}}=r^2\|\phi\|^2_{Y_{\mathbb{C}}}
 +(\la-\la_*)^2\|z\|^2_{Y_{\mathbb{C}}}=\|\phi\|^2_{Y_{\mathbb{C}}},
 \end{split}
 \end{equation}
and substituting \eqref{steady1}, \eqref{eigen2} and $\nu=(\la-\la_*)h$
into Eq. \eqref{eigen}, we see that $(\nu,\theta,\psi)$ solves Eq. \eqref{eigen}, where $\nu>0$, $\theta\in[0,2\pi)$ and $\psi\in X_{\mathbb{C}}(\|\psi\|^2_{Y_{\mathbb{C}}}=\|\phi\|^2_{Y_{\mathbb{C}}})$,
if and only if the following system:
\begin{equation}\label{g1}
\begin{cases}g_1(z,r,h,\theta,\la):=Lz-\la\beta_\la e^{2\alpha m(x)}\left[\phi+(\la-\la_*)\xi_{\la}\right]
\left[r\phi+(\la-\la_*)z\right]e^{-i\theta}\\
+[r\phi+(\la-\la_*)z]\left\{m(x)e^{\alpha m(x)}-\la \beta_\la e^{2\alpha m(x)}\left[\phi+(\la-\la_*)\xi_{\la}\right]-ihe^{\alpha m(x)}\right\}=0\\
 g_2(z,r,\la):=(r^2-1)\|\phi\|^2_{Y_{\mathbb{C}}}+(\la-\la_*)^2\|z\|^2_{Y_{\mathbb{C}}}=0
\end{cases}
\end{equation}
is solvable for some value of $z\in (X_1)_{\mathbb{C}}$, $h>0$, $r\ge0$, and $\theta\in[0,2\pi)$.
Define
$G:(X_1)_{\mathbb C}\times \mathbb{R}^4\to
Y_{\mathbb C}\times \mathbb{R}$ by $G=(g_1,g_2)$, and we find that
$G(z,r,h,\theta,\la)=0$ is uniquely solvable for $\la=\la_*$.
\begin{lemma}\label{l25}
The following equation \begin{equation}\label{3.5G}
\begin{cases}
G(z,r,h,\theta,\la_*)=0\\
z\in (X_1)_{\mathbb{C}},\;h>0\;r\ge0,\; \theta\in[0,2\pi)\\
\end{cases}
\end{equation} has a unique solution $(z_{\la_*},r_{\la_*},h_{\la_*},\theta_{\la_*})$. Here
\begin{equation}\label{lastar}
    r_{\la_*}=1,\;\;\theta_{\la_*}=\pi/2,\;\;h_{\la_*}=\ds\f{\int_\Omega m(x)e^{\alpha m(x)}\phi^2dx}{\int_\Omega e^{\alpha m(x)}\phi^2(x)dx},
\end{equation}
and $z_{\la_*}\in(X_1)_{\mathbb C}$ is the unique solution of
\begin{equation}\label{lastari}
L z=-i\la_*\beta_{\la_*}e^{2\alpha m(x)}\phi^2\\
+ih_{\la_*}e^{\alpha m(x)}\phi
-\phi\left(m(x)e^{\alpha m(x)}-\la_* \beta_{\la_*} e^{2\alpha m(x)}\phi\right),
\end{equation}
where $L$ is defined as in Eq. \eqref{LL}.
\end{lemma}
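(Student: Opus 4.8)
The plan is to exploit that at $\la=\la_*$ every term carrying a factor $(\la-\la_*)$ vanishes, so that the system $G(z,r,h,\theta,\la_*)=0$ decouples. First, from the second component $g_2(z,r,\la_*)=(r^2-1)\|\phi\|^2_{Y_{\C}}=0$ together with the constraint $r\ge0$ one reads off immediately that $r_{\la_*}=1$. Substituting $r=1$ and $\la=\la_*$ into $g_1$ collapses both $\phi+(\la-\la_*)\xi_\la$ and $r\phi+(\la-\la_*)z$ to $\phi$, so the first equation reduces to the linear problem $Lz=R(h,\theta)$ with
\[ R(h,\theta)=-\phi\left(m(x)e^{\alpha m(x)}-\la_*\beta_{\la_*}e^{2\alpha m(x)}\phi\right)+\la_*\beta_{\la_*}e^{2\alpha m(x)}\phi^2e^{-i\theta}+ihe^{\alpha m(x)}\phi. \]

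The key step is the solvability (Fredholm) condition. By \eqref{L} we have $\mathscr{R}(L)=Y_1=\{y\in Y:\int_\Om\phi y\,dx=0\}$ and $\mathscr{N}(L)=\mathrm{span}\{\phi\}$ (reflecting that $L$ is self-adjoint for $\langle\cdot,\cdot\rangle$, its principal part being in divergence form and its zeroth-order coefficient real). Hence $Lz=R(h,\theta)$ is solvable in $(X_1)_{\C}$ if and only if $\int_\Om\phi\,R(h,\theta)\,dx=0$. Writing $A=\int_\Om e^{2\alpha m}\phi^3dx$, $B=\int_\Om m e^{\alpha m}\phi^2dx$, $C=\int_\Om e^{\alpha m}\phi^2dx$ and invoking the defining identity $\la_*\beta_{\la_*}A=B$ from \eqref{al}, I would observe that pairing the first ($\theta$- and $h$-independent) term of $R$ against $\phi$ yields $-B+\la_*\beta_{\la_*}A=0$; this is precisely the cancellation that made $\xi_{\la_*}$ well defined in Theorem \ref{Tsteady} via \eqref{xi}. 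What survives is the single complex equation $Be^{-i\theta}+ihC=0$.

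Separating real and imaginary parts then determines $(\theta,h)$ uniquely. From \eqref{pos} we know $B>0$, and clearly $C>0$; thus the real part $B\cos\theta=0$ forces $\theta\in\{\pi/2,\,3\pi/2\}$, while the imaginary part $hC=B\sin\theta$ together with $h>0$ discards $3\pi/2$, leaving $\theta_{\la_*}=\pi/2$ and $h_{\la_*}=B/C$, which are exactly the values in \eqref{lastar}. With $\theta$ and $h$ so fixed, $R(h_{\la_*},\pi/2)\in(Y_1)_{\C}$, the complexification of $\mathscr{R}(L)$, and since $L$ maps $(X_1)_{\C}$ bijectively onto $(Y_1)_{\C}$, the equation $Lz=R(h_{\la_*},\pi/2)$ has a unique solution $z_{\la_*}\in(X_1)_{\C}$; rewriting $R$ at $\theta=\pi/2$ (so that $e^{-i\theta}=-i$) reproduces exactly \eqref{lastari}.

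I expect the only real obstacle to be the bookkeeping in the orthogonality computation: one must carry the $e^{-i\theta}$ and the $ih$ contributions separately and recognize that the vanishing of the remaining real, parameter-independent part is the very solvability statement already used to define $\beta_{\la_*}$ and $\xi_{\la_*}$. Once that cancellation is seen, the values of $r$, $\theta$, $h$, and then $z$ are forced step by step, and the explicit formulas follow without further difficulty.
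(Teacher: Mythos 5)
Your proof is correct and follows essentially the same route as the paper: read off $r_{\la_*}=1$ from $g_2$, impose the Fredholm solvability condition $\int_\Om\phi\,R\,dx=0$ (which is exactly the paper's system \eqref{cost} once the parameter-independent part cancels via the definition \eqref{al} of $\beta_{\la_*}$), deduce $\theta_{\la_*}=\pi/2$ and $h_{\la_*}$ from the sign conditions $B>0$, $C>0$, $h>0$, and then solve $Lz=R$ uniquely on $(X_1)_{\mathbb C}$. No gaps.
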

\begin{proof}
From Eq. \eqref{g1}, we see that $g_2(z,r,\la_*)=0$ if and only if $r=r_{\la_*}=1$. Note that
\begin{equation}
\begin{split}
g_1(z,r_{\la_*},h,\theta,\la_*)=&Lz-\la_*\beta_{\la_*}e^{2\alpha m(x)}\phi^2e^{-i\theta}\\
-&ihe^{\alpha m(x)}\phi
+\phi\left(m(x)e^{\alpha m(x)}-\la_* \beta_{\la_*} e^{2\alpha m(x)}\phi\right).
\end{split}
\end{equation}
Then \begin{equation*}
\begin{cases}
g_1(z,r_{\la_*},h,\theta,\la_*)=0\\
z\in (X_1)_{\mathbb{C}},\;h>0\;r\ge0,\; \theta\in[0,2\pi)\\
\end{cases}
\end{equation*} is solvable if and only if
\begin{equation}\label{cost}
\begin{cases}
\la_*\beta_{\la_*}\int_\Omega e^{2\alpha m(x)}\phi^3dx\sin\theta=h\int_\Omega e^{\alpha m(x)}\phi^2dx
\\
\la_*\beta_{\la_*}\int_\Omega e^{2\alpha m(x)}\phi^3dx\cos\theta=0\\
\end{cases}
\end{equation}
is solvable for a pair $(\theta,h)$ with $h>0$ and $\theta\in[0,2\pi)$.
This, combined with Eq. \eqref{al}, leads to
\begin{equation}\label{hthe}
\theta=\theta_{\la_*}=\pi/2,\;\;h=h_{\la_*}=\ds\f{\la_*\beta_{\la_*}\int_\Omega e^{2\alpha m(x)}\phi^3dx}{\int_\Omega e^{\alpha m(x)}\phi^2dx}=\ds\f{\int_\Omega m(x)e^{\alpha m(x)}\phi^2dx}{\int_\Omega e^{\alpha m(x)}\phi^2dx}.
\end{equation}
Consequently, $g_1(z,r_{\la_*},h_{\la_*},\theta_{\la_*},\la_*)=0$ has a unique solution $z_{\la_*}$, which satisfies Eq. \eqref{lastari}.
\end{proof}
Then we solve $G=0$ for $\la\in(\la_*,\la^*]$.
\begin{theorem}\label{cha}
There exist $\tilde \la^*>\la_*$ and a continuously differentiable mapping
$\la\mapsto(z_\la,r_\la,h_\la,\theta_\la)$ from
$[\la_*,\tilde \la^{*}]$ to $(X_1)_{\mathbb C}\times \mathbb{R}^3$ such that
$G(z_\la,r_\la,r_\la,\theta_\la,\la)=0$. Moreover, for $\la\in[\la_*,\tilde \la^{*}]$,
\begin{equation}\label{3.6G} \begin{cases}
G(z,r,h,\theta,\la)=0\\
z\in (X_1)_{\mathbb{C}},\;h,\;r\ge0, \;\theta\in[0,2\pi)\\
\end{cases}
\end{equation}
 has a unique solution $(z_\la,r_\la,h_\la,\theta_\la)$.
\end{theorem}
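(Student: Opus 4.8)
The plan is to solve $G=0$ by the implicit function theorem, anchored at the unique solution $(z_{\la_*},r_{\la_*},h_{\la_*},\theta_{\la_*})$ furnished by Lemma \ref{l25}, and then to upgrade the resulting local branch to the global uniqueness asserted in \eqref{3.6G} by a compactness argument. Since $G(z_{\la_*},1,h_{\la_*},\pi/2,\la_*)=0$ is already known, the heart of the matter is to verify that the Fr\'echet derivative $D_{(z,r,h,\theta)}G$ at this base point, viewed as a bounded linear map from $(X_1)_{\mathbb C}\times\mathbb{R}^3$ to $Y_{\mathbb C}\times\mathbb{R}$, is a bijection; the implicit function theorem then yields $\tilde\la^*>\la_*$ together with the $C^1$ map $\la\mapsto(z_\la,r_\la,h_\la,\theta_\la)$.

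First I would compute the derivative, exploiting the drastic simplification at $\la=\la_*$, where every factor $(\la-\la_*)$ vanishes. Differentiating $g_2$ leaves only the $r$-direction, contributing $2\|\phi\|^2_{Y_{\mathbb C}}$ at $r_{\la_*}=1$. In $g_1$ the $z$-dependence collapses to the linear term $Lz$, so $D_z g_1[\zeta]=L\zeta$; the $h$-direction yields the purely imaginary multiplier $-i\phi e^{\alpha m(x)}$ at $r=1$; and, because $\theta_{\la_*}=\pi/2$ forces $e^{-i\theta}=-i$, the $\theta$-direction yields the purely real multiplier $\la_*\beta_{\la_*}e^{2\alpha m(x)}\phi^2$. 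This separation of the $h$- and $\theta$-directions into imaginary and real parts is exactly the structural feature that makes the derivative invertible.

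To prove bijectivity, given $(\varphi,s)\in Y_{\mathbb C}\times\mathbb{R}$ I would solve $D_{(z,r,h,\theta)}G[\zeta,\rho,\eta,\vartheta]=(\varphi,s)$. The $g_2$-component fixes $\rho=s/(2\|\phi\|^2_{Y_{\mathbb C}})$ uniquely, reducing the $g_1$-component to $L\zeta-i\eta\phi e^{\alpha m(x)}+\vartheta\la_*\beta_{\la_*}e^{2\alpha m(x)}\phi^2=\tilde\varphi$ for a known $\tilde\varphi\in Y_{\mathbb C}$. Since $L$ is self-adjoint with $\mathscr{N}(L)=\text{span}\{\phi\}$ and $\mathscr{R}(L)=Y_1$, I would test this equation against $\phi$: the term $\langle\phi,L\zeta\rangle$ drops out, and the Fredholm solvability condition becomes a single complex scalar equation whose imaginary part determines $\eta$ via $\int_\Om\phi^2e^{\alpha m(x)}dx>0$ and whose real part determines $\vartheta$ via $\la_*\beta_{\la_*}\int_\Om\phi^3e^{2\alpha m(x)}dx>0$, both uniquely. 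With $\eta,\vartheta$ so chosen the right-hand side lies in $Y_1=\mathscr{R}(L)$, and Lemma \ref{lem21} guarantees that $L$ is a bijection from $(X_1)_{\mathbb C}$ onto $Y_1$, producing a unique $\zeta$. Hence the derivative is bijective.

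Finally, for the global uniqueness in \eqref{3.6G} I would argue by contradiction: if there were a sequence $\la_k\downarrow\la_*$ carrying solutions distinct from the implicit-function branch, I would extract a convergent subsequence using the a priori bounds $r_k\le1$ (immediate from $g_2=0$), $\theta_k\in[0,2\pi)$, the boundedness of $h_k=\nu_k/(\la_k-\la_*)$ from Lemma \ref{nu}, and an elliptic estimate on $z_k$ derived from $g_1=0$ together with Lemma \ref{lem21}; the limit would necessarily coincide with the base point by Lemma \ref{l25}, contradicting the local uniqueness supplied by the implicit function theorem. The main obstacle is the bijectivity step, specifically ensuring that the two scalar directions $h$ and $\theta$ independently cover the imaginary and real parts of the kernel constraint; the value $\theta_{\la_*}=\pi/2$ is precisely what decouples them.
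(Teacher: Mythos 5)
Your proposal is correct and follows essentially the same route as the paper: the implicit function theorem anchored at the unique solution from Lemma \ref{l25}, with bijectivity of the linearization $T_1(\chi,\kappa,\epsilon,\vartheta)=L\chi-i\epsilon e^{\alpha m(x)}\phi+\vartheta\la_*\beta_{\la_*}e^{2\alpha m(x)}\phi^2+\kappa(\cdots)$ and $T_2(\kappa)=2\kappa\|\phi\|^2_{Y_{\mathbb C}}$, followed by a compactness argument (a priori bounds on $r^\la,h^\la,\theta^\la$ from $g_2=0$ and Lemma \ref{nu}, the $\la_2$-coercivity of Lemma \ref{lem21} to bound $z^\la$, and convergence of any solution family to the base point) to upgrade local to global uniqueness. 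Your Fredholm-alternative verification of bijectivity --- projecting onto $\phi$ and splitting into real and imaginary parts to determine $\vartheta$ and $\eta$ --- correctly supplies the detail the paper leaves as ``one can easily check.''
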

\begin{proof}
Let $T=(T_1,T_2):(X_1)_{\mathbb C}\times \mathbb{R}^3\mapsto
Y_{\mathbb C}\times \mathbb{R}$ be the Fr\'echet derivative of $G$ with respect to
$(z,r,h,\theta)$ at $(z_{\la_*},r_{\la_*},h_{\la_*},\theta_{\la_*},\la_*)$.
Then,
\begin{equation*}
\begin{split}
T_1(\chi,\kappa,\epsilon,\vartheta)=&
L\chi-i\epsilon e^{\alpha m(x)}\phi+\vartheta\la_*\beta_{\la_*}e^{2\alpha m(x)}\phi^2\\
+&\kappa\phi\left[m(x)e^{\alpha m(x)}-\la_*\beta_{\la_*}e^{2\alpha m(x)}\phi-ih_{\la_*}e^{\alpha m(x)}+i\la_*\beta_{\la_*}e^{2\alpha m(x)}\phi\right],\\
T_2(\kappa)=&2\kappa\|\phi\|^2_{Y_{\mathbb{C}}}.
\end{split}
\end{equation*}
One can easily check that $T$ is
a bijection from $(X_1)_{\mathbb C}\times \mathbb{R}^3$ to $Y_{\mathbb
C}\times \mathbb{R}.$ This, combined with the implicit function theorem, implies that
there exist $\tilde \la^*>\la_*$ and a continuously differentiable mapping
$\la\mapsto(z_\la,r_\la,h_\la,\theta_\la)$ from
$[\la_*,\tilde\la^{*}]$ to $X_{\mathbb C}\times \mathbb{R}^3$ such that
$G(z_\la,r_\la,h_\la,\theta_\la,\la)=0$. To prove the uniqueness, we only need to
verify that if $z\in (X_1)_{\mathbb{C}}$, $r^{\la},\;h^{\la}>0$, $\theta^{\la}\in [0,2\pi)$, and  $G(z^\la,r^\la,h^\la,\theta^\la,\la)=0$,
then
$$
(z^\la,r^\la,h^\la,\theta^\la)\rightarrow(z_{\la_*},r_{\la_*},h_{\la_*},\theta_{\la_*})
=\left(z_{\la_*},1,h_{\la_*},\frac{\pi}{2}\right)
$$
as $\la\rightarrow \la_*$ in the norm of $X_{\mathbb{C}}\times
\mathbb{R}^3.$ It follows from Lemma \ref{nu} and Eq. \eqref{g1} that $\{h^\la\}, \{r^\la\}$ and $\{\theta^\la\}$ are
bounded for $\la\in[\la_*,\tilde\la^{*}]$. Note that $\{\beta_{\la}\}$ and
$\{\xi_{\la}\}$ are bounded for $\la\in[\la_*,\tilde\la^{*}]$. As in Theorem 2.4 of \cite{busenberg1996stability}, we can obtain that there exist
$M_1,M_2>0$ such that
$$\la_2\|z^\la\|^2_{Y_{\mathbb{C}}}\leq |
\langle Lz,z\rangle|\le
M_1
\|\phi\|_{Y_{\mathbb{C}}}\|z^\la\|_{Y_{\mathbb{C}}}+
M_2(\la-\la_*)\|z^\la\|^2_{Y_{\mathbb{C}}},
$$
where $\la_2$ is defined as in Lemma \ref{lem21}.
Therefore, if $\tilde \la_*$ is sufficiently small, $\{z^\la\}$ is bounded in
$Y_{\mathbb C}$ for $\la\in[\la_*,\tilde\la^{*}]$. Since the
operator $L^{-1}$ is bounded, we see that
$\{z^\la\}$ is also bounded in $(X_1)_{\mathbb C}$, which implies that
$\{(z^\la,r^\la,h^\la,\theta^\la):\ \la\in(\la_*,\tilde\la^*]\}$ is
precompact in $Y_{\mathbb C}\times\mathbb{R}^3.$ Then, there exists
a subsequence $\{(z^{\la^n},r^{\la^n},h^{\la^n},\theta^{\la^n})\}$
 such that
$$
(z^{\la^n},r^{\la^n},h^{\la^n},\theta^{\la^n})
 \to(z^{\la_*},r^{\la_*},h^{\la_*},\theta^{\la_*})
 \;\text{in}\; Y_{\mathbb C}\times \mathbb{R}^3, \ \ \la^n\rightarrow\la_*\ \text{ as }\
 n\rightarrow\infty.
$$
Taking the limit of the equation
$L^{-1}g_1(z^{\la^n},r^{\la^n},h^{\la^n},\theta^{\la^n},\la^n)=0$
as $n\rightarrow\infty$, we see that
$G(z^{\la_*},r^{\la_*},h^{\la_*},\theta^{\la_*},\la_*)=0$. It follows from Lemma \ref{l25} that
$$(z^{\la_*},r^{\la_*},h^{\la_*},\theta^{\la_*})
=(z_{\la_*},r_{\la_*},h_{\la_*},\theta_{\la_*}).$$ This completes the proof.
\end{proof}
From Theorem \ref{cha}, we derive the following result.
\begin{theorem}\label{c25}
For each $\la\in(\la_*,\tilde \la^*],$ the
following equation
\begin{equation*}
\begin{cases}
\Delta(\la,i\nu,\tau)\psi=0\\
\nu\geq0,\;\tau\ge0,\;\psi (\ne 0) \in X_{\mathbb C}\\
\end{cases}
\end{equation*}
has a solution $(\nu,\tau,\psi)$, if and
only if
\begin{equation}\label{par}
\nu=\nu_\la=(\la-\la_*)h_\la,\;\psi= c \psi_\la,\;
\tau=\tau_{n}=\frac{\theta_\la+2n\pi}{\nu_\la},\;\; n=0,1,2,\cdots,
\end{equation}
where $\psi_\la=r_\la\phi+(\la-\la_*)z_\la$,
$c$ is a nonzero constant, and
$z_\la,r_\la,h_\la,\theta_\la$ are defined as in Theorem
\ref{cha}.
\end{theorem}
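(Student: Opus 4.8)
The plan is to reduce the nonlinear eigenvalue problem $\Delta(\la,i\nu,\tau)\psi=0$ to the system $G=0$ that was already solved in Theorem \ref{cha}. The starting observation is that, by the definition \eqref{triangle} of $\Delta$, setting $\mu=i\nu$ and writing $\theta=\nu\tau$ turns $\Delta(\la,i\nu,\tau)\psi=0$ into exactly Eq. \eqref{eigen} with that value of $\theta$; conversely, since \eqref{eigen} depends on $\theta$ only through $e^{-i\theta}$, any solution of \eqref{eigen} with $\theta\in[0,2\pi)$ and $\nu>0$ produces solutions of $\Delta(\la,i\nu,\tau)\psi=0$ for every $\tau\ge0$ with $\nu\tau\equiv\theta\pmod{2\pi}$. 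Thus the theorem follows once the triples $(\nu,\theta,\psi)$ solving \eqref{eigen} are characterized, which is precisely what the substitution preceding Theorem \ref{cha} converts into $G=0$.

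First I would carry out the normalization carefully. Given any nontrivial solution $\psi$ of \eqref{eigen}, I rescale it by a positive constant so that $\|\psi\|^2_{Y_{\mathbb C}}=\|\phi\|^2_{Y_{\mathbb C}}$, and then multiply by a unit-modulus complex factor to rotate its phase; using the decomposition $X_{\mathbb C}=\mathscr{N}(L)_{\mathbb C}\oplus(X_1)_{\mathbb C}$ from \eqref{L}, I write $\psi=a\phi+(\la-\la_*)z$ with $a\in\mathbb{C}$, $z\in(X_1)_{\mathbb C}$, and choose the phase so that $a=r\ge0$. Setting $h=\nu/(\la-\la_*)$, the normalization becomes $g_2=0$ and Eq. \eqref{eigen} becomes $g_1=0$, so $(z,r,h,\theta)$ solves \eqref{3.6G}. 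This reduction is reversible, the only discarded freedom being the overall nonzero scalar, which reappears as the constant $c$ in \eqref{par}. I would emphasize that this management of the scaling and phase freedom is the main point demanding care, since it is what makes the correspondence between solutions of \eqref{eigen} and of $G=0$ a genuine bijection up to $c$.

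Next I would dispose of the case $\nu=0$. If $\nu=0$, then $h=\nu/(\la-\la_*)=0$ and $e^{-i\nu\tau}=1$, so the reduction above yields a solution of \eqref{3.6G} with $h=0$ (and $\theta=0$). But Theorem \ref{cha} asserts that the unique solution has $h=h_\la$, and $h_\la>0$ for $\la\in(\la_*,\tilde\la^*]$ by continuity from $h_{\la_*}>0$ in \eqref{lastar}, shrinking $\tilde\la^*$ if necessary. This contradiction rules out $\nu=0$, so every solution has $\nu>0$.

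Finally, for $\nu>0$ I would invoke the uniqueness in Theorem \ref{cha}: the reduced data must coincide with $(z_\la,r_\la,h_\la,\theta_\la)$. Unwinding the substitutions gives $\nu=(\la-\la_*)h_\la=\nu_\la$ and $\psi=c\bigl(r_\la\phi+(\la-\la_*)z_\la\bigr)=c\psi_\la$, while $\theta=\theta_\la$ means $\nu_\la\tau\equiv\theta_\la\pmod{2\pi}$, i.e. $\nu_\la\tau=\theta_\la+2n\pi$ for some integer $n$; the constraint $\tau\ge0$ together with $\theta_\la\in[0,2\pi)$ forces $n\ge0$, yielding $\tau=\tau_n$ as in \eqref{par}. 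The converse is a direct verification: since $G(z_\la,r_\la,h_\la,\theta_\la,\la)=0$, reversing the reduction shows that $\psi_\la$ solves \eqref{eigen} with $\nu=\nu_\la$ and $\theta=\theta_\la$, and because $\nu_\la\tau_n=\theta_\la+2n\pi$ gives $e^{-i\nu_\la\tau_n}=e^{-i\theta_\la}$, the function $c\psi_\la$ solves $\Delta(\la,i\nu_\la,\tau_n)\psi=0$ for every $n\ge0$ and every $c\ne0$. This closes both implications.
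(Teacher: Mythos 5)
Your argument is correct and follows exactly the route the paper intends: the paper states Theorem \ref{c25} without proof as an immediate consequence of Theorem \ref{cha}, via the equivalence between purely imaginary eigenvalues of $A_\tau(\la)$ and solvability of Eq. \eqref{eigen}, and the substitution \eqref{eigen2} reducing \eqref{eigen} to $G=0$. Your explicit handling of the scaling/phase normalization, the exclusion of $\nu=0$ through the uniqueness (and positivity of $h_\la$), and the recovery of the $\tau_n$ from $\nu\tau\equiv\theta_\la\pmod{2\pi}$ supplies precisely the details the paper leaves implicit.
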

In the following, we will always assume $\la\in(\la_*,\tilde\la^*]$ for simplicity, where $0 < \la^*-\la_*\ll 1$. Actually, the value of $\tilde\la^*$ may be chosen smaller than the one in Theorem \ref{cha}, since further perturbation
arguments are used.
Now, we give some estimates to prove the simplicity of $i\nu_\la$.

\begin{lemma}\label{thm34}
Assume that $\lambda\in(\lambda_*,\tilde \lambda^*]$. Then,
for $n=0,1,2,\cdots$, \begin{equation}\label{sn}
S_{n}(\la):
=\int_{\Om}e^{\alpha m(x)}\psi_{\la}^2dx-\lambda\tau_{n}e^{-i\theta_\lambda}\int_{\Om}e^{2\alpha m(x)}u_\lambda
\psi_{\la}^2
dx\neq0,
\end{equation}
where $\psi_{\la}$ is defined as
in Theorem \ref{c25}.
\end{lemma}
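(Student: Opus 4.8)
The plan is to reduce $S_n(\la)$ to a form whose limit as $\la\to\la_*$ can be computed explicitly, and then to extract a lower bound on its imaginary part that is uniform in $n$. First I would eliminate the growing factor $\tau_n$ by using $\tau_n\nu_\la=\theta_\la+2n\pi$ together with the explicit expressions $\nu_\la=(\la-\la_*)h_\la$ and $u_\la=\beta_\la(\la-\la_*)[\phi+(\la-\la_*)\xi_\la]$ from Theorem \ref{Tsteady}. Since $u_\la$ carries a factor $(\la-\la_*)$ while $\tau_n$ carries a factor $(\la-\la_*)^{-1}$, these cancel and one obtains
\[
S_n(\la)=Q_\la-(\theta_\la+2n\pi)e^{-i\theta_\la}P_\la,
\]
where $Q_\la:=\int_\Om e^{\alpha m(x)}\psi_\la^2\,dx$ and $P_\la:=\ds\f{\la\beta_\la}{h_\la}\int_\Om e^{2\alpha m(x)}[\phi+(\la-\la_*)\xi_\la]\psi_\la^2\,dx$. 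The point of this rewriting is that both $Q_\la$ and $P_\la$ are bounded and have finite limits as $\la\to\la_*$, uniformly in $n$; all dependence on $n$ is now isolated in the explicit scalar $(\theta_\la+2n\pi)$.

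Next I would compute the limits at $\la=\la_*$. Using $\psi_\la\to\phi$ (as $r_{\la_*}=1$), together with $\beta_\la\to\beta_{\la_*}$ and $h_\la\to h_{\la_*}$ from Theorem \ref{cha} and Lemma \ref{l25}, one gets $Q_\la\to\int_\Om e^{\alpha m(x)}\phi^2\,dx=:Q_*>0$ and $P_\la\to\ds\f{\la_*\beta_{\la_*}}{h_{\la_*}}\int_\Om e^{2\alpha m(x)}\phi^3\,dx$. The explicit value of $h_{\la_*}$ in \eqref{hthe}, namely $h_{\la_*}=\ds\f{\la_*\beta_{\la_*}\int_\Om e^{2\alpha m(x)}\phi^3\,dx}{\int_\Om e^{\alpha m(x)}\phi^2\,dx}$, makes numerator and denominator cancel, so that $P_\la\to Q_*$ as well. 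Since $\theta_\la\to\pi/2$, i.e. $e^{-i\theta_\la}\to -i$, the pointwise limit of $S_n$ at $\la_*$ is $Q_*\bigl[1+i(\pi/2+2n\pi)\bigr]\neq0$. However, this $n$-by-$n$ computation is not yet enough, because $\tau_n\to\infty$ as $n\to\infty$, so a naive continuity argument would require $\tilde\la^*$ to depend on $n$.

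To obtain a bound uniform in $n$ I would focus on $\operatorname{Im}S_n(\la)=\operatorname{Im}Q_\la-(\theta_\la+2n\pi)\operatorname{Im}\!\bigl(e^{-i\theta_\la}P_\la\bigr)$. The two decisive facts are: (i) since $\psi_\la=r_\la\phi+(\la-\la_*)z_\la$ with $r_\la,\phi$ real, one has $\operatorname{Im}\psi_\la^2=O(\la-\la_*)$, hence $|\operatorname{Im}Q_\la|\le C(\la-\la_*)$ using the boundedness of $\{z_\la\}$ from Theorem \ref{cha}; and (ii) $\operatorname{Im}\!\bigl(e^{-i\theta_\la}P_\la\bigr)\to -Q_*<0$ as $\la\to\la_*$, because $e^{-i\theta_\la}\to -i$ and $P_\la\to Q_*$ is real and positive. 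Shrinking $\tilde\la^*$ toward $\la_*$ so that $\operatorname{Im}\!\bigl(e^{-i\theta_\la}P_\la\bigr)\le -Q_*/2$ and $\theta_\la\ge\pi/4$, I obtain for every $n\ge0$ and every $\la\in(\la_*,\tilde\la^*]$
\[
\operatorname{Im}S_n(\la)\ge (\theta_\la+2n\pi)\,\f{Q_*}{2}-C(\la-\la_*)\ge \f{\pi}{4}\cdot\f{Q_*}{2}-C(\la-\la_*)>0,
\]
which forces $S_n(\la)\neq0$ for all $n$.

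The main obstacle is precisely this uniformity in $n$: the individual limits depend on $n$ and are unbounded, so the argument must be arranged so that the growing factor $(\theta_\la+2n\pi)$ multiplies a quantity of fixed sign, turning its growth into an advantage rather than a threat. The boundedness estimates for $\{z_\la\}$, $\{\beta_\la\}$ and $\{\xi_\la\}$ from Theorems \ref{Tsteady} and \ref{cha} are exactly what make the error term $O(\la-\la_*)$ uniform in $n$, allowing a single choice of $\tilde\la^*$ to work for all $n$.
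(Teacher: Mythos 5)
Your proposal is correct, and its core is the same computation as the paper's: use $\tau_n\nu_\la=\theta_\la+2n\pi$, $\nu_\la=(\la-\la_*)h_\la$ and $u_\la=\beta_\la(\la-\la_*)[\phi+(\la-\la_*)\xi_\la]$ to cancel the $(\la-\la_*)$ factors, then pass to the limit $\la\to\la_*$ and use the identity \eqref{hthe} for $h_{\la_*}$ to arrive at $\lim_{\la\to\la_*}S_n(\la)=\bigl[1+i(\tfrac{\pi}{2}+2n\pi)\bigr]\int_\Om e^{\alpha m(x)}\phi^2dx\ne0$. The paper stops exactly there: it records this limit for each fixed $n$ and declares the lemma proved, leaving implicit the point you flag --- that a na\"{\i}ve continuity argument only gives $S_n(\la)\ne0$ on a neighborhood of $\la_*$ that could a priori shrink as $n\to\infty$. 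Your additional step, writing $S_n(\la)=Q_\la-(\theta_\la+2n\pi)e^{-i\theta_\la}P_\la$ with $Q_\la$, $P_\la$ independent of $n$, noting $\operatorname{Im}Q_\la=O(\la-\la_*)$ (since $r_\la$ and $\phi$ are real and $\{z_\la\}$ is bounded) while $\operatorname{Im}(e^{-i\theta_\la}P_\la)\to -Q_*<0$, and then letting the growing factor $(\theta_\la+2n\pi)$ work in your favor, yields a lower bound on $\operatorname{Im}S_n(\la)$ that is uniform in $n$ and hence a single choice of $\tilde\la^*$ valid for all $n$. This is a genuine strengthening of the written argument rather than a deviation from it; what it buys is precisely the uniformity in $n$ that the statement requires and that the paper's one-line limit computation does not explicitly deliver.
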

\begin{proof}
It follows from Theorems \ref{cha} and \ref{c25} that $\theta_{\la}\to \pi/2$, $\tau_n(\la-\la_*)\to (\ds\f{\pi}{2}+2n\pi)/h_{\la_*}$, $\psi_\la\to\phi$ in $X_{\mathbb{C}}$ as $\la\to\la_*$. This, combined with Eq. \eqref{hthe}, yields
\begin{equation}\label{eq31}
\begin{split}
&\lim_{\la\to\la_*}S_n(\la) \\
=&\int_{\Omega} e^{\alpha m(x)}\phi^2 dx+\ds\f{i\beta_{\la_*}\la_*}{h_{\la_*}}\left(\ds\f{\pi}{2}+2n\pi\right)\int_\Omega e^{2\alpha m(x)}\phi^3dx\\
=&\left[1+i(\frac{\pi}{2}+2n\pi)\right]\int_{\Om}e^{\alpha m(x)}\phi^2dx\ne0.
\end{split}
\end{equation}
This completes the proof.
\end{proof}
Then, by virtue of Lemma 2.7, we obtain that $i\nu$ is simple as follows.
\begin{theorem}\label{thm34a}
Assume that
$\lambda\in(\lambda_*,\tilde\lambda^*]$. Then $\mu=i\nu_\lambda$ is a simple
eigenvalue of $A_{\tau_{n}}$ for $n=0,1,2,\cdots$, where $i\nu_\la$ and $\tau_n$ are defined as in Theorem \ref{c25}.
\end{theorem}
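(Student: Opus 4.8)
The plan is to show that $\mu=i\nu_\la$ has both geometric and algebraic multiplicity one as an eigenvalue of $A_{\tau_n}:=A_{\tau_n}(\la)$. The geometric part is immediate from the preceding work: any eigenfunction $\Psi$ associated with $i\nu_\la$ satisfies $A_{\tau_n}\Psi=\dot\Psi=i\nu_\la\Psi$, hence $\Psi(s)=e^{i\nu_\la s}\psi$ for some $\psi\in X_{\mathbb C}$, and the boundary matching condition in $\mathscr{D}(A_{\tau_n})$ forces $\Delta(\la,i\nu_\la,\tau_n)\psi=0$. By Theorem \ref{c25} every such $\psi$ is a scalar multiple of $\psi_\la$, so $\mathscr{N}(A_{\tau_n}-i\nu_\la)$ is spanned by the single function $s\mapsto e^{i\nu_\la s}\psi_\la$.

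For algebraic simplicity I would argue by contradiction, assuming a generalized eigenvector $\Psi_2\in\mathscr{D}(A_{\tau_n})$ exists with $(A_{\tau_n}-i\nu_\la)\Psi_2=\Psi_1$, where $\Psi_1(s)=e^{i\nu_\la s}\psi_\la$ spans the eigenspace. Reading the relation as the ODE $\dot\Psi_2(s)-i\nu_\la\Psi_2(s)=e^{i\nu_\la s}\psi_\la$ on $[-\tau_n,0]$ and integrating yields $\Psi_2(s)=e^{i\nu_\la s}\left(s\psi_\la+\psi_2\right)$ for some $\psi_2\in X_{\mathbb C}$. I then substitute $\Psi_2$ into the matching condition defining $\mathscr{D}(A_{\tau_n})$, using $\dot\Psi_2(0)=i\nu_\la\psi_2+\psi_\la$, $\Psi_2(0)=\psi_2$, $\Psi_2(-\tau_n)=e^{-i\nu_\la\tau_n}(-\tau_n\psi_\la+\psi_2)$, together with $e^{-i\nu_\la\tau_n}=e^{-i\theta_\la}$ (since $\nu_\la\tau_n=\theta_\la+2n\pi$). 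After cancelling the $i\nu_\la\psi_2$ terms, the matching condition collapses precisely to
\begin{equation*}
\Delta(\la,i\nu_\la,\tau_n)\psi_2=\left[1-\la\tau_n e^{\alpha m(x)}u_\la e^{-i\theta_\la}\right]\psi_\la,
\end{equation*}
so a generalized eigenvector exists if and only if the right-hand side lies in $\mathscr{R}\!\left(\Delta(\la,i\nu_\la,\tau_n)\right)$.

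The final step is the Fredholm alternative. The operator $\Delta(\la,i\nu_\la,\tau_n):X_{\mathbb C}\to Y_{\mathbb C}$ is a relatively compact perturbation of an elliptic operator with compact resolvent, hence Fredholm of index zero with closed range and one-dimensional kernel $\mathrm{span}\{\psi_\la\}$. To identify the range I would multiply through by $e^{\alpha m(x)}$, which renders the principal part $\nabla\cdot[e^{\alpha m}\nabla\,\cdot\,]$ formally self-adjoint in the standard $L^2$ product; conjugating Eq. \eqref{eigen} then shows that the kernel of this adjoint is spanned by $\overline{\psi_\la}$. The solvability condition therefore reads
\begin{equation*}
\left\langle\overline{\psi_\la},\,e^{\alpha m(x)}\left[1-\la\tau_n e^{\alpha m(x)}u_\la e^{-i\theta_\la}\right]\psi_\la\right\rangle=\int_{\Om}e^{\alpha m(x)}\psi_\la^2\,dx-\la\tau_n e^{-i\theta_\la}\int_{\Om}e^{2\alpha m(x)}u_\la\psi_\la^2\,dx=S_n(\la).
\end{equation*}
By Lemma \ref{thm34}, $S_n(\la)\neq0$, so the solvability condition fails, the right-hand side is not in the range, and no generalized eigenvector exists; hence the algebraic and geometric multiplicities both equal one.

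I expect the main obstacle to be the middle reduction: correctly solving $(A_{\tau_n}-i\nu_\la)\Psi_2=\Psi_1$ for the profile $\Psi_2(s)=e^{i\nu_\la s}(s\psi_\la+\psi_2)$ and tracking the constants so that the inhomogeneous elliptic problem for $\psi_2$ carries exactly the right forcing term. The delicate point is that the advection weight $e^{\alpha m}$ makes $\Delta(\la,i\nu_\la,\tau_n)$ non-self-adjoint in the standard inner product, so one must conjugate by $e^{\alpha m}$ (equivalently, work with $\int_{\Om}e^{\alpha m}\bar u v\,dx$) to locate the adjoint eigenfunction $\overline{\psi_\la}$; only then does the solvability functional coincide verbatim with $S_n(\la)$, and Lemma \ref{thm34} is exactly the ingredient prepared to close the argument.
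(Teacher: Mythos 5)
Your proposal is correct and follows essentially the same route as the paper: reduce the generalized-eigenvector equation to the inhomogeneous elliptic problem $\Delta(\la,i\nu_\la,\tau_n)\psi_2=\bigl[1-\la\tau_n e^{\alpha m(x)}u_\la e^{-i\theta_\la}\bigr]\psi_\la$, pair against $\overline{\psi_\la}$ using the symmetry of $e^{\alpha m(x)}\Delta(\la,\cdot,\tau_n)$ and $\Delta(\la,-i\nu_\la,\tau_n)\overline{\psi_\la}=0$, and conclude from $S_n(\la)\neq 0$ (Lemma \ref{thm34}). The only cosmetic difference is that you package the last step as the full Fredholm alternative, whereas the paper needs only the necessary direction of the solvability condition (showing the constant $a$ must vanish).
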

\begin{proof}
It follows from Theorem \ref{c25} that $\mathscr{N}[A_{\tau_{n}}
(\lambda)-i\nu_\lambda]=\text{Span}[e^{i\nu_\lambda\theta}\psi_\lambda]$, where $\theta\in[-\tau_n,0]$ and $\psi_\la$ is defined as in
Theorem \ref{c25}.
If
$\phi_1\in\mathscr{N}[A_{\tau_{n}}
(\lambda)-i\nu_\lambda]^2$,
then
$$
[A_{\tau_{n}}
(\lambda)-i\nu_\lambda]\phi_1\in\mathscr{N}[A_{\tau_{n}}(\lambda)-i\nu_\lambda]=
\text{Span}[e^{i\nu_\lambda\theta}\psi_\lambda].
$$
Therefore, there exists a constant $a$ such that
$$
[A_{\tau_{n}}
(\lambda)-i\nu_\lambda]\phi_1=ae^{i\nu_\lambda\theta}\psi_\lambda,
$$
which yields
\begin{equation}
 \label{eq32}
\begin{split}
\dot{\phi_1}(\theta)&=i\nu_\lambda\phi_1(\theta)+ae^{i\nu_\lambda\theta}\psi_\lambda,
\ \ \ \ \theta\in[-\tau_{n},0], \\
 \dot{\phi_1}(0)&=
 e^{-\alpha m(x)}\nabla\cdot[e^{\alpha m(x)}\nabla \phi_1(0)]\\&+\la \left[m(x)-e^{\alpha m(x)}u_{\la}\right]\phi_1(0)-\la e^{\alpha m(x)} u_{\la}\phi_1(-\tau_n).
 \end{split}
 \end{equation}
From the first equation of Eq. \eqref{eq32}, we see that
\begin{equation}
\label{eq33}
\begin{split}
\phi_1(\theta)&=\phi_1(0)e^{i\nu_\lambda\theta}+a\theta
e^{i\nu_\lambda\theta}\psi_\lambda,\\
\dot{\phi_1}(0)&=i\nu_\lambda\phi_1(0)+a\psi_\lambda.
\end{split}
\end{equation}
Eq. \eqref{eq32} and Eq. \eqref{eq33} imply that
\begin{equation}\label{provesimple}
\begin{split}
&e^{\alpha m(x)}\Delta(\lambda,i\nu_\la,\tau_{n})\phi_1(0)\\=&\nabla\cdot[e^{\alpha m(x)}\nabla \phi_1(0)]-i\nu_\la e^{\alpha m(x)}\psi_1(0)\\+&\la \left[m(x)e^{\alpha m(x)}-e^{2\alpha m(x)}u_{\la}\right]\phi_1(0)-\la e^{2\alpha m(x)} u_{\la}\phi_1(0)e^{-i\theta_\la}
\\=&ae^{\alpha m(x)}\left(\psi_{\la}-\lambda\tau_{n}u_\lambda\psi_{\la}e^{\alpha m(x)}e^{-i\theta_{\la}}\right).
\end{split}\end{equation}
Since $\Delta(\la,i\nu_\la,\tau_n)\psi_\la=0$, we have $\Delta(\la,-i\nu_\la,\tau_n)\overline\psi_\la=0$. This, combined with Eq. \eqref{provesimple}, yields
\begin{equation*}\begin{split}
0&=\left\langle e^{\alpha m(x)}\Delta(\la,-i\nu_\la,\tau_n)\overline \psi_\la,\phi_1(0)\right\rangle=\left\langle\overline\psi_\lambda,e^{\alpha m(x)}\Delta(\lambda,i\nu_\la,\tau_{n})\phi_1(0)\right\rangle\\
&=a\left(\int_{\Om}e^{\alpha m(x)}\psi_{\la}^2dx-\lambda\tau_{n}e^{-i\theta_\la}\int_{\Om}\int_{\Om}u_\lambda
\psi_\la^2 e^{2\alpha m(x)}
dx\right),
\end{split}
\end{equation*} which implies that
$a=0$ from Lemma \ref{thm34}. Therefore,
$$
\mathscr{N}[A_{\tau_{n}}(\lambda)-i\nu_\lambda]^j
=\mathscr{N}[A_{\tau_{n}}(\lambda)-i\nu_\lambda],\;\;j=
2,3,\cdots,\;\; n=0,1,2,\cdots,
$$
and $\lambda=i\nu_\lambda$ is a simple eigenvalue of
$A_{\tau_{n}}$ for $n=0,1,2,\cdots.$
\end{proof}
Note that $\mu=i\nu_{\la}$ is a simple eigenvalue of $A_{\tau_{n}}$. It follows from
the implicit function theorem that there
are a neighborhood $O_{n}\times D_{n}\times
H_{n}\subset\mathbb{R}\times\mathbb{C}\times X_{\mathbb{C}}$ of
$(\tau_{n},i\nu_\lambda,\psi_\lambda)$ and a continuously
differential function $(\mu(\tau),\psi(\tau)):O_{n}\rightarrow D_{n}\times
H_{n}$ such that for each $\tau\in O_{n}$, the only eigenvalue of
$A_\tau(\lambda)$ in $D_{n}$ is $\mu(\tau),$ and
\begin{equation}\label{eq34}
\begin{split}
&e^{\alpha m(x)}\Delta(\lambda,\mu(\tau),\tau)\psi(\tau)=\nabla\cdot[e^{\alpha m(x)}\nabla \psi(\tau)]-e^{\alpha m(x)}\mu(\tau)\psi(\tau)\\
&+\la \left[m(x)e^{\alpha m(x)}-e^{2\alpha m(x)}u_{\la}\right]\psi(\tau)-\la e^{2\alpha m(x)} u_{\la}\psi(\tau) e^{-\mu(\tau)\tau}=0.
\end{split}
\end{equation}
Moreover, $
\mu(\tau_{n})=i\nu_\lambda$, and $\psi(\tau_{n})=\psi_\lambda$.
Then we have the following transversality
condition.
\begin{theorem}\label{thm35}
Assume that
$\lambda\in(\lambda_*,\tilde \lambda^*]$. Then
$$
\frac{d{\mathcal Re}[\mu(\tau_{n})]} {d\tau}>0,\ \ \ \ n=0,1,2,\cdots.
$$
\end{theorem}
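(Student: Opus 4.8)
The plan is to differentiate the branch equation \eqref{eq34} for $(\mu(\tau),\psi(\tau))$ with respect to $\tau$, evaluate at $\tau=\tau_n$, and then isolate $\mu'(\tau_n):=\ds\f{d\mu}{d\tau}(\tau_n)$ by testing against the conjugate eigenfunction. Writing $\psi'(\tau_n):=\ds\f{d\psi}{d\tau}(\tau_n)$ and recalling $\mu(\tau_n)=i\nu_\la$, $\psi(\tau_n)=\psi_\la$ and $e^{-\mu(\tau_n)\tau_n}=e^{-i\theta_\la}$ (since $\nu_\la\tau_n=\theta_\la+2n\pi$), I would differentiate, use $\ds\f{d}{d\tau}e^{-\mu\tau}=-(\mu'\tau+\mu)e^{-\mu\tau}$, and observe that all terms carrying $\psi'(\tau_n)$ reassemble into $e^{\alpha m(x)}\Delta(\la,i\nu_\la,\tau_n)\psi'(\tau_n)$. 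This yields
\begin{equation*}
e^{\alpha m(x)}\Delta(\la,i\nu_\la,\tau_n)\psi'(\tau_n)=e^{\alpha m(x)}\mu'(\tau_n)\psi_\la-\la e^{2\alpha m(x)}u_\la\psi_\la e^{-i\theta_\la}\left(\mu'(\tau_n)\tau_n+i\nu_\la\right).
\end{equation*}

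Next I would pair this identity with $\overline\psi_\la$ in the inner product $\langle\cdot,\cdot\rangle$. The crucial point is the symmetry already used in the proof of Theorem \ref{thm34a}: since the elliptic part $\nabla\cdot[e^{\alpha m(x)}\nabla\,\cdot\,]$ is formally self-adjoint under the Dirichlet condition and the remaining coefficients act by multiplication, one has $\langle\overline\psi_\la,e^{\alpha m(x)}\Delta(\la,i\nu_\la,\tau_n)\psi'(\tau_n)\rangle=\langle e^{\alpha m(x)}\Delta(\la,-i\nu_\la,\tau_n)\overline\psi_\la,\psi'(\tau_n)\rangle$, and the right-hand side vanishes because $\Delta(\la,-i\nu_\la,\tau_n)\overline\psi_\la=0$. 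Hence the paired left-hand side is $0$, and with the abbreviations $P:=\ds\int_\Om e^{\alpha m(x)}\psi_\la^2dx$ and $Q:=\ds\int_\Om e^{2\alpha m(x)}u_\la\psi_\la^2dx$ I obtain $\mu'(\tau_n)\left(P-\la\tau_n e^{-i\theta_\la}Q\right)=i\la\nu_\la e^{-i\theta_\la}Q$. The coefficient $P-\la\tau_n e^{-i\theta_\la}Q$ is precisely $S_n(\la)$ of Lemma \ref{thm34}, which is nonzero, so
\begin{equation*}
\mu'(\tau_n)=\ds\f{i\la\nu_\la e^{-i\theta_\la}Q}{S_n(\la)}.
\end{equation*}

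It then remains to extract the sign of the real part. Multiplying numerator and denominator by $\overline{S_n(\la)}$, the contribution proportional to $|Q|^2$ is purely imaginary and drops out, leaving $\mathcal{Re}[\mu'(\tau_n)]=\ds\f{\la\nu_\la}{|S_n(\la)|^2}\,\mathcal{Re}\left[i e^{-i\theta_\la}Q\,\overline P\right]$. To fix the sign I would pass to the limit $\la\to\la_*$, using the asymptotics of Theorems \ref{cha} and \ref{c25}: $\psi_\la\to\phi$, $\theta_\la\to\pi/2$ (so $e^{-i\theta_\la}\to-i$), $\nu_\la=(\la-\la_*)h_\la$ with $h_\la\to h_{\la_*}>0$, and $u_\la=\beta_\la(\la-\la_*)[\phi+(\la-\la_*)\xi_\la]$, whence $P\to\int_\Om e^{\alpha m(x)}\phi^2dx>0$ while $Q=(\la-\la_*)\beta_\la\int_\Om e^{2\alpha m(x)}[\phi+(\la-\la_*)\xi_\la]\psi_\la^2dx$ has the integral tending to $\beta_{\la_*}\int_\Om e^{2\alpha m(x)}\phi^3dx>0$. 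Thus both $\nu_\la$ and $Q$ are of order $(\la-\la_*)$, $S_n(\la)\to[1+i(\pi/2+2n\pi)]\int_\Om e^{\alpha m(x)}\phi^2dx\neq0$, and
\begin{equation*}
\ds\f{\mathcal{Re}[\mu'(\tau_n)]}{(\la-\la_*)^2}\longrightarrow\ds\f{\la_*h_{\la_*}\beta_{\la_*}\int_\Om e^{2\alpha m(x)}\phi^3dx}{|1+i(\pi/2+2n\pi)|^2\int_\Om e^{\alpha m(x)}\phi^2dx}>0
\end{equation*}
as $\la\to\la_*$, since $\mathcal{Re}[i(-i)]=1$ and every factor is positive. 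Consequently, after possibly shrinking $\tilde\la^*$, $\mathcal{Re}[\mu'(\tau_n)]>0$ for all $\la\in(\la_*,\tilde\la^*]$ and all $n$.

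The main obstacle is this last sign determination: the explicit formula for $\mu'(\tau_n)$ carries the complex quantity $S_n(\la)$ in the denominator, so positivity of the real part is not manifest and must be teased out. The perturbation argument near $\la_*$ is what makes it transparent — both $\nu_\la$ and $Q$ vanish to first order in $\la-\la_*$ while $S_n(\la)$ stays bounded away from $0$ by Lemma \ref{thm34}, so $\mathcal{Re}[\mu'(\tau_n)]$ is positive and of order $(\la-\la_*)^2$. This is precisely why the transversality statement is restricted to a small right-neighborhood of $\la_*$, consistent with the remark preceding it that $\tilde\la^*$ may have to be further reduced.
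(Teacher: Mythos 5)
Your proposal is correct and follows essentially the same route as the paper: differentiate the branch equation \eqref{eq34} at $\tau=\tau_n$, annihilate the $\frac{d\psi}{d\tau}$ term by pairing with $\overline\psi_\la$ via the self-adjointness of $e^{\alpha m(x)}\Delta(\la,\cdot,\tau_n)$, obtain $\mu'(\tau_n)=i\la\nu_\la e^{-i\theta_\la}Q/S_n(\la)$ with $S_n(\la)\ne 0$ from Lemma \ref{thm34}, and fix the sign by the limit $\la\to\la_*$ where $\mathcal{R}e[\mu'(\tau_n)]$ is positive of order $(\la-\la_*)^2$. Your observation that the $|Q|^2$ contribution is exactly purely imaginary after multiplying by $\overline{S_n(\la)}$ is a slightly cleaner bookkeeping of the same computation as \eqref{eq35}, and your limiting value agrees with the paper's via the identity \eqref{hthe}.
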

\begin{proof}
Differentiating Eq.(\ref{eq34}) with respect to $\tau$ at
$\tau=\tau_{n}$ yields
\begin{equation}\label{transcond}
\begin{split}
&\frac{d\mu(\tau_{n})}{d\tau}\left[-e^{\alpha m(x)}\psi_\la+\la\tau_n e^{2\alpha m(x)}u_\la\psi_\la
e^{-i\theta_\la}\right]
\\&+e^{\alpha m(x)}\Delta(\lambda,i\nu_\lambda,\tau_{
n})\frac{d\psi(\tau_{n})}{d\tau} +i\nu_\lambda \lambda e^{2\alpha m(x)}u_\lambda
\psi_\la e^{-i\theta_\la}=0.
\end{split}
\end{equation}
Note that \begin{equation}\label{aa}\left\langle \overline\psi_\la, e^{\alpha m(x)}\Delta(\la,i\nu_\la,\tau_n)\ds\f{\psi(\tau_n)}{d\tau}\right\rangle=\left\langle e^{\alpha m(x)}\Delta(\la,-i\nu_\la,\tau_n)\overline\psi_\la, \ds\f{\psi(\tau_n)}{d\tau}\right\rangle=0.
\end{equation}
Then, multiplying Eq. \eqref{transcond} by $\psi_\lambda$ and
integrating the result over $\Om$, we have
\begin{equation}
\label{eq35}
\begin{split}\displaystyle\frac{d\mu(\tau_{n})}{d\tau}=&\frac{i\nu_\lambda\lambda
e^{-i\theta_\lambda}\displaystyle\int_{\Om}e^{2\alpha m(x)}u_\lambda
\psi_\lambda^2dx}
{\int_{\Om}e^{\alpha m(x)}\psi_{\la}^2dx-\lambda\tau_{n}e^{-i\theta_\lambda}\int_{\Om}e^{2\alpha m(x)}u_\lambda
\psi_{\la}^2
dx}\\
=&\frac{1}{|S_{n}(\la)|^2}\bigg(i\nu_\lambda\lambda
e^{-i\theta_\lambda}\int_{\Om}e^{\alpha m(x)}\psi_\la^2dx
\displaystyle\int_{\Om}e^{2\alpha m(x)}u_\lambda
\psi_\lambda^2dx\\-&i\nu_\lambda\lambda^2
\tau_{n}\left[\int_{\Om}e^{2\alpha m(x)}u_\lambda
\psi_\lambda^2dx\right]^2\bigg).
\end{split}
\end{equation}
It follows from Eq. \eqref{hthe} and the expression of $u_\la$, $\theta_\la$, $\nu_\la$ and $\psi_\la$ that
\begin{equation*}
\lim_{\la\to\la_*}\ds\f{1}{(\la-\la_*)^2}\frac{d\mathcal{R}e[\mu(\tau_{n})]}{d\tau}
=\ds\f{h_{\la_*}^2}{\lim_{\la_\to\la_*}|{S_n}(\la)|^2}\left(\int_\Omega e^{\alpha m(x)}\phi^2dx\right)^2>0.
\end{equation*}
\end{proof}
From Theorems \ref{c25}, \ref{thm34a}
and \ref{thm35}, we have the result on the distribution of eigenvalues of $A_\tau(\lambda)$.
 \begin{theorem}\label{thm36}
For
$\lambda\in(\lambda_*,\tilde \lambda^*]$, the infinitesimal generator
$A_\tau(\lambda)$ has exactly $2(n+1)$ eigenvalues with positive
real parts when $\tau\in(\tau_{n},\tau_{n+1}],\
n=0,1,2,\cdots.$
\end{theorem}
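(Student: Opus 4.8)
The plan is to follow the motion of the eigenvalues of $A_\tau(\la)$ as the delay $\tau$ increases from $0$, using Theorems \ref{c25}, \ref{thm34a} and \ref{thm35} to control when and how they cross the imaginary axis. Since $A_\tau(\la)$ has compact resolvent, its spectrum is a discrete set of eigenvalues of finite multiplicity whose real parts are bounded above, with only finitely many in any right half-plane $\{\mathcal{R}e\,\mu\ge -c\}$; in particular the number $N(\tau)$ of eigenvalues (counted with multiplicity) having positive real part is a well-defined nonnegative integer that varies with $\tau$ only through the continuous motion of these eigenvalues.

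First I would establish the baseline $N(\tau)=0$ for $\tau\in[0,\tau_0)$. At $\tau=0$ the characteristic equation $\Delta(\la,\mu,0)\psi=0$ is the self-adjoint elliptic eigenvalue problem $e^{-\alpha m(x)}\nabla\cdot[e^{\alpha m(x)}\nabla\psi]+\la[m(x)-2e^{\alpha m(x)}u_\la]\psi=\mu\psi$ in the weighted inner product $\int_\Om e^{\alpha m(x)}\,\overline\psi\,\chi\,dx$, so its spectrum is real. Because for $\tau=0$ the steady state $u_\la$ is globally attractive among nontrivial nonnegative solutions when $\la>\la_*$, it is linearly stable, so every eigenvalue of this problem is strictly negative; in particular $0$ is not an eigenvalue and $N(0)=0$. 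Since $\nu_\la=(\la-\la_*)h_\la>0$ and $\theta_\la>0$ for $\la$ close to $\la_*$, we have $\tau_0=\theta_\la/\nu_\la>0$, and by Theorem \ref{c25} no eigenvalue of $A_\tau(\la)$ lies on the imaginary axis for $\tau\in[0,\tau_0)$; by continuity $N(\tau)\equiv 0$ there.

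Next I would show that $N(\tau)$ increases by exactly $2$ as $\tau$ crosses each $\tau_n$. By Theorem \ref{c25} the only purely imaginary eigenvalues are $\pm i\nu_\la$, occurring precisely at the values $\tau=\tau_n$ (the possibility $\mu=0$ is excluded because $0$ is not an eigenvalue of the self-adjoint problem above). By Theorem \ref{thm34a} the eigenvalue $i\nu_\la$ of $A_{\tau_n}(\la)$ is simple, and by conjugation symmetry so is $-i\nu_\la$. The transversality condition of Theorem \ref{thm35}, namely $\frac{d\mathcal{R}e[\mu(\tau_n)]}{d\tau}>0$, shows that the branch $\mu(\tau)$ with $\mu(\tau_n)=i\nu_\la$ passes strictly from the left half-plane into the right half-plane as $\tau$ increases through $\tau_n$, and its complex conjugate does the same. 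Hence exactly two eigenvalues cross from $\{\mathcal{R}e\,\mu<0\}$ to $\{\mathcal{R}e\,\mu>0\}$ at each $\tau_n$, while no other eigenvalue touches the axis there, so $N$ jumps by $2$ at $\tau_n$ and is constant on each open interval $(\tau_n,\tau_{n+1})$.

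Assembling these facts gives the result. On $(\tau_n,\tau_{n+1})$ exactly the $n+1$ conjugate pairs that have crossed at $\tau_0,\dots,\tau_n$ lie in the open right half-plane, so $N(\tau)=2(n+1)$; at the endpoint $\tau=\tau_{n+1}$ the next pair $\pm i\nu_\la$ sits on the imaginary axis and is not yet counted, so $N(\tau_{n+1})=2(n+1)$ as well, which is why the count $2(n+1)$ holds on the half-open interval $(\tau_n,\tau_{n+1}]$. The main obstacle is the bookkeeping that legitimizes this counting, that is, ensuring that no eigenvalue enters from or escapes to $\mathcal{R}e\,\mu=+\infty$ as $\tau$ ranges over a compact set and that each crossing is clean rather than a higher-order or tangential contact. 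These are precisely handled by the discreteness and right-boundedness of the spectrum of $A_\tau(\la)$ together with the simplicity (Theorem \ref{thm34a}) and strict transversality (Theorem \ref{thm35}) already proved, so the integer $N(\tau)$ can change only through the transversal passage of the pair $\pm i\nu_\la$ at the $\tau_n$.
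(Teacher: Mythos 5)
Your argument is correct and is exactly the reasoning the paper intends: the paper states Theorem \ref{thm36} as an immediate consequence of Theorems \ref{c25}, \ref{thm34a} and \ref{thm35} without writing out the details, and you have supplied precisely the standard bookkeeping (stability at $\tau=0$ via the self-adjoint linearization, crossings only at the $\tau_n$ by Theorem \ref{c25}, each crossing contributing a simple conjugate pair moving rightward by Theorems \ref{thm34a} and \ref{thm35}). The only minor looseness is inferring strict linear stability at $\tau=0$ from global attractivity; the cleaner justification is that the principal eigenvalue of $e^{-\alpha m}\nabla\cdot[e^{\alpha m}\nabla\,\cdot\,]+\la[m-e^{\alpha m}u_\la]$ is $0$ with eigenfunction $u_\la$, so subtracting the positive potential $\la e^{\alpha m}u_\la$ makes it strictly negative.
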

 Then we obtain the stability and associated Hopf bifurcations of the positive steady state solution $u_{\la}$. We remark that the local Hopf bifurcation theorem for partial functional differential equations was proved in \cite{wu1996theory} (see Theorem 4.5 on page 208).
\begin{theorem}\label{thm37}For  $\lambda\in(\lambda_*,\tilde \lambda^*]$,
the positive steady state $u_\lambda$ of Eq. \eqref{delay}
is locally asymptotically stable when $\tau\in[0,\tau_{0})$, and
unstable when $\tau\in(\tau_{0},\infty)$. Moreover, when $\tau=\tau_n$,
$(n=0,1,2,\cdots)$, system \eqref{delay} occurs Hopf bifurcation at the positive steady state $u_{\la}$.
\end{theorem}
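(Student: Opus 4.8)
The plan is to read off the conclusion from the spectral picture assembled in Theorems \ref{c25}, \ref{thm34a}, \ref{thm35} and \ref{thm36}, combined with the global attractivity at $\tau=0$ and the Hopf bifurcation theorem for partial functional differential equations (Theorem 4.5 of \cite{wu1996theory}). First I would settle local asymptotic stability on $[0,\tau_0)$. At $\tau=0$ the positive steady state $u_\lambda$ is globally attractive among nontrivial nonnegative solutions (by \cite{Belgacem1995,LouC}), so every eigenvalue of $A_0(\lambda)$ has negative real part. By Theorem \ref{c25}, $A_\tau(\lambda)$ admits a purely imaginary eigenvalue only at the discrete values $\tau=\tau_n$; since the eigenvalues depend continuously on $\tau$, none of them can cross the imaginary axis while $\tau\in[0,\tau_0)$. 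Hence all eigenvalues retain negative real part there, and the principle of linearized stability for such delayed reaction--diffusion generators yields that $u_\lambda$ is locally asymptotically stable for $\tau\in[0,\tau_0)$.

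For instability I would invoke Theorem \ref{thm36} directly: whenever $\tau\in(\tau_n,\tau_{n+1}]$ the generator $A_\tau(\lambda)$ has exactly $2(n+1)\ge 2$ eigenvalues with positive real part. In particular, for every $\tau\in(\tau_0,\infty)$ at least one eigenvalue lies in the open right half-plane, so $u_\lambda$ is unstable. This covers the dichotomy in the statement.

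Finally, for the Hopf bifurcation at each $\tau=\tau_n$ I would check the three hypotheses of Wu's theorem. The eigenvalue $i\nu_\lambda$ of $A_{\tau_n}(\lambda)$ is simple by Theorem \ref{thm34a}; the transversality condition $\frac{d\,\mathcal{R}e[\mu(\tau_n)]}{d\tau}>0$ holds by Theorem \ref{thm35}; and the nonresonance requirement---that $ik\nu_\lambda$ fails to be an eigenvalue for integers $k\ne\pm 1$---follows from Theorem \ref{c25}, because the only admissible frequency of a purely imaginary eigenvalue is $\nu_\lambda$ itself, so $k\nu_\lambda\ne\nu_\lambda$ for $k\ge 2$ rules out resonance. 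With these three conditions, Theorem 4.5 of \cite{wu1996theory} gives a Hopf bifurcation at $u_\lambda$ when $\tau=\tau_n$, $n=0,1,2,\dots$. The one step needing care is the ``no crossing before $\tau_0$'' claim: beyond continuity of the eigenvalues and the exclusivity in Theorem \ref{c25}, I would note that the continuous (essential) part of the spectrum of $A_\tau(\lambda)$ stays in the left half-plane---because the $-\mu\psi$ term dominates at large $|\mu|$---so that tracking the finitely many genuine eigenvalues suffices for the stability conclusion. Everything else is bookkeeping over the already-established lemmas.
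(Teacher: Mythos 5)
Your argument is correct and coincides with the paper's intended (and unwritten) proof: the paper states Theorem \ref{thm37} as a direct consequence of Theorems \ref{c25}, \ref{thm34a}, \ref{thm35} and \ref{thm36} together with Theorem 4.5 of \cite{wu1996theory}, which is exactly the bookkeeping you carry out. The only point worth tightening is that global attractivity at $\tau=0$ by itself excludes only eigenvalues with positive real part; to get strictly negative real parts at $\tau=0$ you also need Theorem \ref{c25} (which shows $\mu=i\nu$ with $\nu\ge 0$, in particular $\mu=0$, is never an eigenvalue), and you do invoke this in the very next sentence, so the gap closes.
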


\section{The direction of the Hopf bifurcation}

In this section, we combine the
methods in
\cite{faria2001normal,faria2002normal,faria2002smoothness,hassard1981theory} to analyze the direction of the Hopf bifurcation of Eq. \eqref{delay}.
Letting $U(t)=u(\cdot,t)-u_\la$, $t=\tau\tilde t$,
$\tau=\tau_n+\gamma$, and dropping the tilde sign, system \eqref{delay} can be transformed as follows:
\begin{equation}\label{ab}
\ds\frac{dU(t)}{dt}=\tau_n e^{-\alpha m(x)}\nabla\cdot[e^{\alpha m(x)}\nabla U(t)]+\tau_nL_0(U_t)+J(U_t,\gamma),
\end{equation}
where $U_t\in \mathcal{C}=C([-1,0],Y)$, and
\begin{equation*}
\begin{split}
&L_{0}(U_t)=\la \left[m(x)-e^{\alpha m(x)}u_{\la}\right]U(t)-\la e^{\alpha m(x)} u_{\la}U(t-1),\\
 &J(U_t,\gamma)=\gamma \tau_n e^{-\alpha m(x)}\nabla\cdot[e^{\alpha m(x)}\nabla U(t)]+\gamma
L_{0}(U_t)-(\gamma+\tau_n)\la e^{\alpha m(x)} U(t)U(t-1).\\
\end{split}
\end{equation*}
Then Eq. \eqref{ab} occurs Hopf bifurcation near the zero equilibrium  when $\gamma=0$. Let
$\mathcal {A}_{\tau_n}$ be the infinitesimal generator of the
linearized equation
\begin{equation}\label{lab}
\ds\frac{dU(t)}{dt}=\tau_n e^{-\alpha m(x)}\nabla\cdot[e^{\alpha m(x)}\nabla U(t)]+\tau_nL_0(U_t).
\end{equation}
It follows from \cite{wu1996theory} that \begin{equation*}\begin{split}
\mathcal{A}_{\tau_n} \Psi=&\dot\Psi,\\
\mathscr{D}(\mathcal{A}_{\tau_n})=&\Big\{\Psi\in \mathcal
{C}_\mathbb{C}
\cap \mathcal {C}^1_\mathbb{C}:\ \Psi(0)\in X_{\mathbb{C}},\dot\Psi(0)=\tau_n e^{-\alpha m(x)}\nabla\cdot[e^{\alpha m(x)}\nabla \Psi(0)]\\&+\la \tau_n\left[m(x)-e^{\alpha m(x)}u_{\la}\right]\Psi(0)-\la\tau_n e^{\alpha m(x)} u_{\la}\Psi(-1)\Big\},
\end{split}\end{equation*}
where $\mathcal{C}^1_\mathbb{C}=C^1([-1,0],Y_\mathbb{C})$,
and Eq. \eqref{ab} can be written in the following abstract form
\begin{equation}\label{abab}
\ds\frac{dU_t}{dt}=\mathcal{A}_{\tau_n}U_t+X_0J(U_t,\gamma),
\end{equation}
where
\begin{equation*}X_0(\theta)=\begin{cases}0, \;\;\; & \theta\in[-1,0),\\
I, \;\;\; &\theta=0.\\
\end{cases}\end{equation*}
It follows from Theorem \ref{thm36} that $\mathcal {A}_{\tau_n}$ has
only one pair of purely imaginary eigenvalues $\pm i \nu_\la\tau_n$,
which are simple, and the corresponding eigenfunction with respect to
$i\nu_\la\tau_n$ (respectively, $-i\nu_\la\tau_n$) is $\psi_\la e^{ i \nu_\la\tau_n\theta}$
(respectively, $\overline{\psi_\la} e^{-i\nu_\la\tau_n\theta}$) for $\theta\in[-1,0]$, where
$\psi_\la$ is defined as in Theorem \ref{c25}.

Following \cite{faria2002normal,Su2011}, we introduce the formal
duality $\langle\langle\cdot,\cdot\rangle\rangle$ in $\mathcal{C}$  by
\begin{equation}\label{bil}
\langle\langle\tilde\Psi,\Psi\rangle\rangle=\langle
\tilde\Psi(0),\Psi(0)\rangle_1-\la\tau_n\int_{-1}^0\left\langle\tilde\Psi(s+1),u_\la e^{\alpha m(x)}\Psi(s) \right\rangle_1 ds,
\end{equation}
for $\Psi\in \mathcal{C}_{\mathbb{C}}$ and $\tilde\Psi\in
\mathcal{C}_{\mathbb{C}}^*:= C([0,1],Y_{\mathbb{C}})$, where $$\langle u,v \rangle_1=\ds\int_{\Om} e^{\alpha m(x)}\overline
u(x) {v}(x) dx.$$
Since $m(x)$ is bounded and $e^{\alpha m(x)}$ is positive, we see that $Y_{\mathbb{C}}$ is also a Hilbert space with this product, and
$$e^{\alpha \min_\Omega m(x)}\langle v, v\rangle \le \langle v,v\rangle_1\le e^{\alpha \max_\Omega m(x)}\langle v, v\rangle.$$
As in \cite{Hale1971}, we can compute the
formal adjoint operator $\mathcal{A}^*_{\tau_n}$ of $\mathcal{A}_{\tau_n}$ with respect to the formal duality.

\begin{lemma}\label{dualoperator}
The
formal adjoint operator $\mathcal{A}^*_{\tau_n}$ of $\mathcal{A}_{\tau_n}$ is defined by
$$\mathcal{A}^*_{\tau_n}\tilde\Psi(s)=-\dot{\tilde\Psi}(s),$$ and the domain
\begin{equation*}\begin{split}
 \mathscr{D}(\mathcal{A}^*_{\tau_n}) = \Big\{ &\tilde\Psi\in \mathcal{C}^*_\mathbb{C}
\cap (\mathcal{C}^*_\mathbb{C})^1:\tilde\Psi(0)\in X_\mathbb{C},-\dot{\tilde\Psi}(0)=\tau_n e^{-\alpha m(x)}\nabla\cdot[e^{\alpha m(x)}\nabla \tilde\Psi(0)]\\&+\la \tau_n\left[m(x)-e^{\alpha m(x)}u_{\la}\right]\tilde \Psi(0)-\la\tau_n e^{\alpha m(x)} u_{\la}\tilde\Psi(1) \Big\},
\end{split}\end{equation*}
where $(\mathcal{C}^*_\mathbb{C})^1=C^1([0,1],Y_\mathbb{C})$. Moreover,
$\mathcal{A}^*_{\tau_n}$ and $\mathcal{A}_{\tau_n}$ satisfy
\begin{equation}\label{Atauadjont}
\langle\langle
\mathcal{A}^*_{\tau_n}\tilde\Psi,\Psi\rangle\rangle=\langle\langle
\tilde\Psi,\mathcal{A}_{\tau_n}\Psi\rangle\rangle\;\;\text{for
}\Psi\in\mathscr{D}(\mathcal{A}_{\tau_n})\text{ and }\tilde\Psi\in
\mathscr{D}(\mathcal{A}^*_{\tau_n}).
\end{equation}
\end{lemma}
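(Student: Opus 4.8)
The plan is to verify the adjoint relation \eqref{Atauadjont} by a direct computation and to \emph{read off} the domain $\mathscr{D}(\mathcal{A}^*_{\tau_n})$ from the requirement that all boundary contributions match. Write $\mathcal{L}\psi:=e^{-\alpha m(x)}\nabla\cdot[e^{\alpha m(x)}\nabla\psi]$ for brevity. Fix $\Psi\in\mathscr{D}(\mathcal{A}_{\tau_n})$ and a candidate $\tilde\Psi\in\mathcal{C}^*_\mathbb{C}\cap(\mathcal{C}^*_\mathbb{C})^1$ with $\tilde\Psi(0)\in X_\mathbb{C}$. Since $\mathcal{A}_{\tau_n}\Psi=\dot\Psi$, the definition \eqref{bil} of the formal duality gives
\begin{equation*}
\langle\langle\tilde\Psi,\mathcal{A}_{\tau_n}\Psi\rangle\rangle=\langle\tilde\Psi(0),\dot\Psi(0)\rangle_1-\la\tau_n\int_{-1}^0\langle\tilde\Psi(s+1),u_\la e^{\alpha m(x)}\dot\Psi(s)\rangle_1\,ds.
\end{equation*}
The goal is to show that this equals $\langle\langle-\dot{\tilde\Psi},\Psi\rangle\rangle$ precisely when $\tilde\Psi$ obeys the stated boundary condition, which then simultaneously identifies the formula $\mathcal{A}^*_{\tau_n}\tilde\Psi=-\dot{\tilde\Psi}$ and the domain.

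First I would integrate by parts in the delay variable $s$. Differentiating under the integral sign, the delay term produces
\begin{equation*}
\begin{split}
\int_{-1}^0\langle\tilde\Psi(s+1),u_\la e^{\alpha m(x)}\dot\Psi(s)\rangle_1\,ds
=&\;\langle\tilde\Psi(1),u_\la e^{\alpha m(x)}\Psi(0)\rangle_1-\langle\tilde\Psi(0),u_\la e^{\alpha m(x)}\Psi(-1)\rangle_1\\
&-\int_{-1}^0\langle\dot{\tilde\Psi}(s+1),u_\la e^{\alpha m(x)}\Psi(s)\rangle_1\,ds,
\end{split}
\end{equation*}
the last integral being exactly the delay term of $\langle\langle-\dot{\tilde\Psi},\Psi\rangle\rangle$. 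Next I substitute the defining relation of $\mathscr{D}(\mathcal{A}_{\tau_n})$, namely $\dot\Psi(0)=\tau_n\mathcal{L}\Psi(0)+\la\tau_n[m(x)-e^{\alpha m(x)}u_\la]\Psi(0)-\la\tau_n e^{\alpha m(x)}u_\la\Psi(-1)$, into $\langle\tilde\Psi(0),\dot\Psi(0)\rangle_1$. The $\Psi(-1)$ contribution arising there is $-\la\tau_n\langle\tilde\Psi(0),u_\la e^{\alpha m(x)}\Psi(-1)\rangle_1$, which cancels exactly against the $+\la\tau_n\langle\tilde\Psi(0),u_\la e^{\alpha m(x)}\Psi(-1)\rangle_1$ left over from the boundary term at $s=-1$; this cancellation is the structural reason the bilinear form \eqref{bil} is tailored to this equation.

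It then remains to transfer the spatial operator and the residual boundary terms onto $\tilde\Psi$. Here I would use that $\mathcal{L}$ is self-adjoint with respect to $\langle\cdot,\cdot\rangle_1$: indeed $\langle\tilde\Psi(0),\mathcal{L}\Psi(0)\rangle_1=\int_\Om\overline{\tilde\Psi(0)}\,\nabla\cdot[e^{\alpha m(x)}\nabla\Psi(0)]\,dx=-\int_\Om e^{\alpha m(x)}\nabla\overline{\tilde\Psi(0)}\cdot\nabla\Psi(0)\,dx$, with boundary terms vanishing because $\Psi(0),\tilde\Psi(0)\in X_\mathbb{C}\subset (H^1_0(\Om))_\mathbb{C}$, and a second integration by parts returns $\langle\mathcal{L}\tilde\Psi(0),\Psi(0)\rangle_1$. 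Since $m(x)$, $u_\la$ and $e^{\alpha m(x)}$ are real, the zeroth-order multiplier $\la[m(x)-e^{\alpha m(x)}u_\la]$ is likewise self-adjoint under $\langle\cdot,\cdot\rangle_1$, and the term $-\la\tau_n\langle\tilde\Psi(1),u_\la e^{\alpha m(x)}\Psi(0)\rangle_1$ rewrites as $-\langle\la\tau_n u_\la e^{\alpha m(x)}\tilde\Psi(1),\Psi(0)\rangle_1$. Collecting the coefficients of $\langle\,\cdot\,,\Psi(0)\rangle_1$, the identity $\langle\langle\tilde\Psi,\mathcal{A}_{\tau_n}\Psi\rangle\rangle=\langle\langle-\dot{\tilde\Psi},\Psi\rangle\rangle$ holds for every $\Psi$ if and only if
\begin{equation*}
-\dot{\tilde\Psi}(0)=\tau_n\mathcal{L}\tilde\Psi(0)+\la\tau_n[m(x)-e^{\alpha m(x)}u_\la]\tilde\Psi(0)-\la\tau_n e^{\alpha m(x)}u_\la\tilde\Psi(1),
\end{equation*}
which is exactly the boundary condition defining $\mathscr{D}(\mathcal{A}^*_{\tau_n})$. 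The one point requiring genuine care — and the main obstacle — is the bookkeeping of the weight $e^{\alpha m(x)}$ together with the complex conjugation in $\langle\cdot,\cdot\rangle_1$: one must confirm that the weight cancels correctly in the spatial integration by parts and that no spurious boundary term survives, so that the elliptic part is truly Hermitian under this particular weighted inner product rather than the unweighted one $\langle\cdot,\cdot\rangle$.
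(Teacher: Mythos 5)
Your proposal is correct and follows essentially the same route as the paper: expand the formal duality, integrate by parts in the delay variable $s$, substitute the domain condition for $\Psi$ so the $\Psi(-1)$ boundary term cancels, and use the self-adjointness of $e^{-\alpha m(x)}\nabla\cdot[e^{\alpha m(x)}\nabla\cdot]$ and the real zeroth-order multipliers under the weighted inner product $\langle\cdot,\cdot\rangle_1$ to transfer everything onto $\tilde\Psi$. The only cosmetic difference is that you frame the computation as deriving the domain from the matching of boundary contributions, whereas the paper simply verifies the identity for $\tilde\Psi$ already in the stated domain; the underlying calculation is identical.
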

\begin{proof}
For $\Psi\in\mathscr{D}(\mathcal{A}_{\tau_n})$ and $\tilde\Psi\in
\mathscr{D}(\mathcal{A}^*_{\tau_n})$,
\begin{equation*}
\begin{split}
\langle\langle
\tilde\Psi,\mathcal{A}_{\tau_n}\Psi\rangle\rangle=&\left\langle
\tilde\Psi(0),
(\mathcal{A}_{\tau_n}\Psi)(0)\right\rangle_1-\la\tau_n\int_{-1}^0\left\langle\tilde\Psi(s+1),
u_\la e^{\alpha m(x)}\dot\Psi(s)\right\rangle_1 ds\\
=&\left\langle\tilde\Psi(0),\tau_n e^{-\alpha m(x)}\nabla\cdot[e^{\alpha m(x)}\nabla \Psi(0)]\right\rangle_1-\la\tau_n\left[\left\langle\tilde\Psi(s+1), u_\la e^{\alpha m(x)}\Psi(s)\right\rangle_1\right]_{-1}^{0}\\+&\left\langle \tilde\Psi(0),\la \tau_n\left[m(x)-e^{\alpha m(x)}u_{\la}\right] \Psi(0)-\la\tau_n e^{\alpha m(x)} u_{\la}\Psi(-1)\right\rangle_1\\
+&\la\tau_n\int_{-1}^0\left\langle\dot{\tilde\Psi}(s+1),
u_\la e^{\alpha m(x)}\Psi(s)\right\rangle_1 ds\\
=&\left\langle
(\mathcal{A}^*_{\tau_n}\tilde\Psi)(0),\Psi(0)\right\rangle_1
-\la\tau_n\int_{-1}^0\left\langle-\dot{\tilde\Psi}(s+1),
u_\la e^{\alpha m(x)}\Psi(s)\right\rangle_1 ds\\
=&\langle\langle
\mathcal{A}^*_{\tau_n}\tilde\Psi,\Psi\rangle\rangle.
\end{split}
\end{equation*}
\end{proof}
Similarly, it follows from Theorem \ref{thm36} that
the operator $\mathcal {A}^*_{\tau_n}$ has only one pair of purely imaginary
eigenvalues $\pm i \nu_\la\tau_n$, which are simple, and the associated
eigenfunction with respect to $-i\nu_\la\tau_n$ (respectively, $i\nu_\la\tau_n$) is $\overline\psi_\la
e^{i \nu_\la\tau_n s}$ (respectively,
$\psi_\la e^{-i \nu_\la\tau_ns}$) for $s\in[0,1]$,
where $\psi_\la$ is defined as in Theorem \ref{c25}.
From \cite{wu1996theory}, we see that the center subspace of Eq. \eqref{ab} is
$P=\text{span}\{p(\theta),\overline {p}(\theta)\}$, where
$p(\theta)=\psi_\la e^{ i \nu_\la\tau_n\theta}$ is the eigenfunction
of $\mathcal{A}_{\tau_n}$ with respect to $i\nu_\la\tau_n$.
The formal adjoint subspace of $P$ is $P^*=\text{span}\{q(s),\overline{
q}(s)\}$, where $q(s)=\overline\psi_\la e^{ i \nu_\la\tau_ns}$ is the
eigenfunction of $\mathcal{A}^*_{\tau_n}$ with respect to
$-i\nu_\la\tau_n$. Let $\Phi_p=(p(\theta),\overline p(\theta))$,
$\Psi_P=\ds\f{1}{\overline {S_n}(\la)}(q(s),\overline {q}(s))^{T}$,
where $S_n(\la)$ is defined in Lemma \ref{thm34}, and one can easily check that
$\langle\langle\Psi_p,\Phi_p\rangle\rangle= I$, where $I$ is the
identity matrix in $\mathbb{R}^{2\times2}$. Moreover, $\mathcal{C}_{\mathbb{C}}$ can be decomposed
as $\mathcal{C}_{\mathbb{C}}=P\oplus Q$, where
$$
Q=\{\Psi\in\mathcal{C}_{\mathbb{C}}:
\langle\langle\tilde\Psi,\Psi\rangle\rangle=0 \text{ for all
}\tilde\Psi\in P^*\}.$$

Since the formulas of Hopf bifurcation are all relative to $\gamma=0$ only, we set $\gamma=0$ in
Eq. \eqref{ab}. Let
\begin{equation}\label{center}w(z,\overline
z)=w_{20}(\theta)\ds\frac{z^{2}}{2}+w_{11}(\theta)z\overline
z+w_{02}(\theta)\ds\frac{\overline z^{2}}{2}+\cdots\end{equation}
be the center manifold with the range in $Q$, and then the flow of Eq. \eqref{ab} on the center
manifold can be written as:
\begin{equation*}U_t=\Phi_p\cdot (z(t),\overline z(t))^{T}+w(z(t),\overline z(t)),\end{equation*}
where
\begin{equation}\label{z(t)}
\begin{split}
\dot{z}(t) =&\ds\f{d}{dt}\langle\langle q(s),U_t\rangle\rangle\\
=&\langle\langle q(s),
\mathcal{A}_{\tau_n}U_t\rangle\rangle+\ds\f{1}{S_n(\la)}\langle\langle q(s), X_0J(U_t,0)\rangle\rangle\\
 =& i\nu_\la\tau_n z(t)+\ds\f{1}{S_n(\la)}\left\langle q(0),
J\left(\Phi_p(z(t),\overline z(t))^{T}+w(z(t),\overline
z(t)),0\right)\right\rangle_1\\
=&i\nu_\la\tau_nz(t)+g(z,\overline z).
\end{split}
\end{equation}
Then,
\begin{equation} \label{g}
\begin{split}
g(z,\overline z)=&\ds\f{1}{S_n(\la)}\left\langle q(0),
J\left(\Phi_p(z(t),\overline z(t))^{T}+w(z(t),\overline
z(t)),0\right)\right\rangle_1\\=&g_{20}\ds\frac{z^{2}}{2}+g_{11}z\overline
z+g_{02}\ds\frac{\overline z^{2}}{2}+g_{21}\ds\frac{z^{2}\overline
z}{2}+\cdots,
\end{split}
\end{equation}
and an easy calculation implies that
\begin{equation}\label{gij}
\begin{split}
g_{20}=&-\ds\f{2\la\tau_n}{S_n(\la)}e^{-i\nu_\la\tau_n}\int_\Om e^{2\alpha m(x)}\psi^3_\la dx,\\
g_{11}=&-\left[\ds\f{\la\tau_n}{S_n(\la)}(e^{i\nu_\la\tau_n}+e^{-i\nu_\la\tau_n})\right]\int_\Om e^{2\alpha m(x)}\psi_\la|\psi_\la|^2dx,\\
g_{02}=&-\ds\f{2\la\tau_n}{S_n(\la)}e^{i\nu_\la\tau_n}\int_\Om e^{2\alpha m(x)}\psi_\la\overline{\psi}^2_\la dx,\\
g_{21}=&-\ds\f{2\la\tau_n}{S_n(\la)}\int_\Om e^{2\alpha m(x)}\psi_\la^2 w_{11}(-1)dx
-\ds\f{\la\tau_n}{S_n(\la)}\int_\Om e^{2\alpha m(x)}|\psi_\la|^2 w_{20}(-1)dx\\
-&\ds\f{\la\tau_n}{S_n(\la)}
e^{i\nu_\la\tau_n}\int_\Om e^{2\alpha m(x)}|\psi_\la|^2 w_{20}(0)dx
-\ds\f{2\la\tau_n}{S_n(\la)}e^{-i\nu_\la\tau_n}\int_\Om e^{2\alpha m(x)} \psi_\la^2 w_{11}(0)dx.
\end{split}
\end{equation}
To compute $g_{21}$, we need to compute $w_{20}(\theta)$
and $w_{11}(\theta)$ in the following.
As in \cite{Chen2012,hassard1981theory}, we see that $w_{20}(\theta)$
and $w_{11}(\theta)$ satisfy
\begin{equation}\label{ws}
\begin{cases}
(2i\nu_\la\tau_n-\mathcal{A}_{\tau_n})w_{20}=H_{20},\\
-\mathcal{A}_{\tau_n}w_{11}=H_{11}.\\
\end{cases}
\end{equation}
Here, for $-1\le\theta<0$,
\begin{equation}\label{H20}
H_{20}(\theta)=-(g_{20}p(\theta)+\overline g_{02}\overline
p(\theta)),
\end{equation}
\begin{equation}\label{H11}
H_{11}(\theta)=-(g_{11}p(\theta)+\overline g_{11}\overline
p(\theta)),
\end{equation}
and, for $\theta=0$,
\begin{equation}\label{H200}
H_{20}(0) =-\left(g_{20}p(0)+\overline g_{02}\overline
p(0)\right)-2\la\tau_ne^{-i\nu_\la\tau_n}e^{\alpha m(x)}\psi^2_\la,
\end{equation}
\begin{equation}\label{H110}
H_{11}(0) =-\left(g_{11}p(0)+\overline g_{11}\overline
p(0)\right)-\la\tau_n\left(e^{-i\nu_\la\tau_n}+e^{i\nu_\la\tau_n}\right)e^{\alpha m(x)}|\psi_\la|^2.
\end{equation}
It follows from Eqs. \eqref{ws}-\eqref{H11} that $w_{20}(\theta)$ and $w_{11}(\theta)$ can be solved as follows:
\begin{equation}\label{W20}w_{20}(\theta)=\ds\frac{ig_{20}}{\nu_\la\tau_n}p(\theta)+
\ds\frac{i\overline g_{02}}{3\nu_\la\tau_n}\overline
p(\theta)+Ee^{2i\nu_\la\tau_n\theta},
\end{equation}
and
\begin{equation}\label{W11}
w_{11}(\theta)=-\ds\frac{ig_{11}}{\nu_\la\tau_n}p(\theta)+\ds\frac{i\overline
g_{11}}{\nu_\la\tau_n}\overline p(\theta)+F.
\end{equation}
From Eq. \eqref{ws} with $\theta=0$, the definition
of $\mathcal{A}_{\tau_n}$ and
we see that $E$ satisfies
\begin{equation*}(2i\nu_\la\tau_n-\mathcal{A}_{\tau_n})
Ee^{2i\nu_\la\tau_n\theta}\bigg\vert_{\theta=0}=-2\la\tau_ne^{-i\nu_\la\tau_n}e^{\alpha m(x)}\psi^2_\la,
\end{equation*}
or equivalently,
\begin{equation}\label{E}
\Delta(\la,2i\nu_\la,\tau_n)E=2\la e^{-i\nu_\la\tau_n}e^{\alpha m(x)}\psi^2_\la.
\end{equation}
Note that $2i\nu_\la$ is not the
eigenvalue of $A_{\tau_n}(\la)$ for $\la\in(\la_*,\tilde\la^*]$, and hence
\begin{equation*}
E=2\la
e^{-i\nu_\la\tau_n}\Delta(\la,2i\nu_\la,\tau_n)^{-1}\left(e^{\alpha m(x)}\psi^2_\la\right).
\end{equation*}
Similarly, from Eqs. \eqref{ws}, \eqref{H110}, and \eqref{W11}, we have
\begin{equation}\label{F1}
F=\la\left(e^{-i\nu_\la\tau_n}+e^{i\nu_\la\tau_n}\right)\Delta(\la,0,\tau_n)^{-1}\left(
e^{\alpha m(x)}|\psi_\la|^2\right).
\end{equation}
In the following, we obtain the similar result as in \cite{Chen2012} for the expression of $E$ and $F$.
\begin{lemma}\label{comf}
Assume that $E$ and $F$ satisfy \eqref{E} and \eqref{F1}, respectively.
Then
\begin{equation}\label{EFgostar}
E=\ds\f{1}{\la-\la_*}(c_\la u_\la+\eta_{\la}),\;\; F=\ds\f{\tilde{\eta}_{\la}}{\la-\la_*},
\end{equation}
where $u_\la$ is defined as in \eqref{steady1}, $\eta_{\la}$ and $\tilde{\eta}_{\la}$ satisfy
$$\langle u_\la,\eta_{\la}\rangle=0,\;\;\lim_{\la\to\la_*}\|\eta_{\la}\|_{Y_\mathbb{C}}=0,
\;\;\lim_{\la\to\la_*}\|\tilde{\eta}_{\la}\|_{Y_\mathbb{C}}=0,$$
and the constant $c_{\la}$ satisfies $\ds \lim_{\la\to\la_*}(\la-\la_*)
c_\la=\ds\f{2i}{\alpha^2_{\la_*}(2i-1)}$.
\end{lemma}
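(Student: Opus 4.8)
The plan is to treat \eqref{E} and \eqref{F1} as singularly perturbed linear equations whose governing operators collapse onto the self-adjoint operator $L$ as $\la\to\la_*$, and to isolate the resulting singular behaviour by a Lyapunov--Schmidt reduction along the one-dimensional kernel $\mathscr{N}(L)=\text{span}\{\phi\}$. Throughout, let $P$ denote the $\langle\phi,\cdot\rangle$-projection of $Y_{\mathbb{C}}$ onto $\mathscr{N}(L)$, so that $I-P$ projects onto $Y_1=\mathscr{R}(L)$.

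First I would multiply \eqref{E} by $e^{\alpha m(x)}$ and set $\mathcal{L}_\la:=e^{\alpha m(x)}\Delta(\la,2i\nu_\la,\tau_n)$, so that the equation for $E$ becomes $\mathcal{L}_\la E=\tilde R_\la$, where $\tilde R_\la:=2\la e^{-i\nu_\la\tau_n}e^{2\alpha m(x)}\psi_\la^2$. Since $\nu_\la=(\la-\la_*)h_\la\to0$ and $u_\la\to0$ as $\la\to\la_*$, a direct computation from \eqref{triangle} gives $\mathcal{L}_{\la_*}=L$ and $\mathcal{L}_\la-L=(\la-\la_*)M_\la$ for an explicit bounded multiplication operator $M_\la$; using $\nu_\la\tau_n=\theta_\la+2n\pi\to\pi/2$ (Theorems \ref{cha}, \ref{c25}) the $u_\la$-term in $M_\la$ drops out in the limit because $1+e^{-2i\nu_\la\tau_n}\to1+e^{-i\pi}=0$, leaving $M_{\la_*}\psi=e^{\alpha m(x)}(m(x)-2ih_{\la_*})\psi$. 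Writing $E=a_\la\phi+w_\la$ with $\langle\phi,w_\la\rangle=0$ (so $w_\la\in(X_1)_{\mathbb{C}}$), projecting the equation onto $\phi$ and onto $Y_1$ yields the bifurcation equation and the auxiliary equation. Because $L$ is self-adjoint with $L\phi=0$, one has $\langle\phi,Lw_\la\rangle=0$, so the bifurcation equation reads $(\la-\la_*)\big[a_\la\langle\phi,M_\la\phi\rangle+\langle\phi,M_\la w_\la\rangle\big]=\langle\phi,\tilde R_\la\rangle$.

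The decisive feature of $E$ is that its right-hand side is \emph{not} (even in the limit) in $\mathscr{R}(L)=Y_1$: since $e^{-i\nu_\la\tau_n}\to-i$ we have $\langle\phi,\tilde R_\la\rangle\to-2i\la_*\int_\Om e^{2\alpha m(x)}\phi^3\,dx\neq0$, while by \eqref{hthe} the leading coefficient $\langle\phi,M_{\la_*}\phi\rangle=h_{\la_*}(1-2i)\int_\Om e^{\alpha m(x)}\phi^2\,dx\neq0$. Hence the bifurcation equation forces $a_\la$ to blow up at rate $(\la-\la_*)^{-1}$, and, setting $b_\la:=(\la-\la_*)a_\la$, its limit is $b_*=\lim_{\la\to\la_*}\langle\phi,\tilde R_\la\rangle/\langle\phi,M_{\la_*}\phi\rangle$, which reduces via \eqref{hthe} to $b_*=2i/[\beta_{\la_*}(2i-1)]$. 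Meanwhile the auxiliary equation $Lw_\la=(I-P)\big[\tilde R_\la-(\la-\la_*)M_\la(a_\la\phi+w_\la)\big]$, combined with boundedness of $b_\la$ and boundedness of $L^{-1}$ on $Y_1$ (which follows from the coercivity estimate of Lemma \ref{lem21}), shows that $\{w_\la\}$ is bounded, indeed convergent, in $(X_1)_{\mathbb{C}}$. I would then repackage the singular part: since $u_\la=\beta_\la(\la-\la_*)[\phi+(\la-\la_*)\xi_\la]$ is to leading order a multiple of $\phi$, I define $c_\la:=\langle u_\la,(\la-\la_*)E\rangle/\|u_\la\|^2$ and $\eta_\la:=(\la-\la_*)E-c_\la u_\la$, so that $\langle u_\la,\eta_\la\rangle=0$ automatically and $E=\frac{1}{\la-\la_*}(c_\la u_\la+\eta_\la)$. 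The leading-order cancellation $c_\la u_\la=b_\la\phi+O(\la-\la_*)$ yields $\eta_\la=(\la-\la_*)w_\la+O(\la-\la_*)\to0$, while $(\la-\la_*)c_\la\to b_*/\beta_{\la_*}=2i/[\beta_{\la_*}^2(2i-1)]$, the asserted constant.

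For $F$ the same reduction applies but the conclusion is milder. Multiplying \eqref{F1} by $e^{\alpha m(x)}$ again produces an operator degenerating to $L$, now with right-hand side $\la(e^{-i\nu_\la\tau_n}+e^{i\nu_\la\tau_n})e^{2\alpha m(x)}|\psi_\la|^2=2\la\cos\theta_\la\,e^{2\alpha m(x)}|\psi_\la|^2$. Here the obstruction vanishes, because $\cos\theta_\la\to\cos(\pi/2)=0$; in fact $\theta_\la=\pi/2+O(\la-\la_*)$ gives $\cos\theta_\la=O(\la-\la_*)$, so the right-hand side decays at precisely the rate at which the operator degenerates. Repeating the projection argument (the analogous leading coefficient is $-h_{\la_*}\int_\Om e^{\alpha m(x)}\phi^2\,dx\neq0$) shows that $F$ stays \emph{bounded} as $\la\to\la_*$, whence $\tilde\eta_\la:=(\la-\la_*)F$ satisfies $\|\tilde\eta_\la\|_{Y_{\mathbb{C}}}\to0$ and $F=\tilde\eta_\la/(\la-\la_*)$, as claimed. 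I expect the main obstacle in both cases to be identical: proving that the component orthogonal to $\phi$ remains bounded uniformly in $\la$, which rests on the uniform invertibility of $L$ on $Y_1$ together with control of the perturbation $M_\la$. The remaining steps — evaluating the limiting constant and the orthogonality bookkeeping that converts the $\phi$-leading term into a $u_\la$-leading term while preserving $\langle u_\la,\eta_\la\rangle=0$ — are then routine.
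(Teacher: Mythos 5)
Your proposal is correct and reaches the same constant as the paper (note the statement's $\alpha_{\la_*}$ is a typo for $\beta_{\la_*}$; your value $2i/[\beta_{\la_*}^2(2i-1)]$ agrees with what the paper's proof actually derives). The route is genuinely different in its decomposition, though the underlying mechanism is the same Lyapunov--Schmidt idea. The paper substitutes the ansatz $E=\frac{1}{\la-\la_*}(c_\la u_\la+\eta_\la)$ directly and exploits that $u_\la$ is an exact kernel element of the $\la$-dependent operator $L_\la=\nabla\cdot[e^{\alpha m}\nabla]+\la e^{\alpha m}[m-e^{\alpha m}u_\la]$: testing the resulting equation against $u_\la$ kills the $L_\la\eta_\la$ term exactly and yields a formula for $c_\la$, while testing against $\overline\eta_\la$ plus the spectral gap $|\langle\eta_\la,L_\la\eta_\la\rangle|\ge|\la_2(\la)|\,\|\eta_\la\|^2$ closes the coupled estimates and forces $\|\eta_\la\|_{Y_\mathbb{C}}\to0$. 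You instead split along the kernel of the \emph{limiting} operator $L$, writing $E=a_\la\phi+w_\la$ with $\langle\phi,w_\la\rangle=0$, read off the blow-up rate of $a_\la$ and its limit from the bifurcation equation, and only afterwards convert to the $u_\la$-based decomposition so that $\langle u_\la,\eta_\la\rangle=0$ holds by construction. What the paper's choice buys is that the orthogonality constraint and the formula for $c_\la$ are exact from the start, with no conversion bookkeeping; what your choice buys is that the projected coefficients $\langle\phi,M_{\la_*}\phi\rangle=h_{\la_*}(1-2i)\int_\Om e^{\alpha m}\phi^2\,dx$ and $\langle\phi,\tilde R_{\la_*}\rangle$ are computed against the fixed, explicitly known $\phi$, which makes the origin of the constant transparent, and your treatment of $F$ is sharper than the paper's (you identify the exact cancellation $\cos\theta_\la=O(\la-\la_*)$ against the $O(\la-\la_*)$ degeneration of the operator, giving boundedness of $F$ itself rather than merely $(\la-\la_*)F\to0$). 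The one point you should make explicit in a write-up is the bootstrapping that decouples the bifurcation and auxiliary equations (since $a_\la$ appears in the equation for $w_\la$ and vice versa); this is the same two-inequality absorption argument the paper performs with its estimates from testing against $u_\la$ and $\overline\eta_\la$, and it does go through for $\la-\la_*$ small.
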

\begin{proof} We just prove the estimate for $E$, and that for $F$ can be derived similarly.
Denote the operator
\begin{equation}\label{Lla}
L_\la:=\nabla\cdot \left[e^{\alpha m(x)}\nabla \right]+\la e^{\alpha m(x)}[m(x)-e^{\alpha m(x)}u_\la],
\end{equation}
and consequently $L_\la u_\la=0$.
Substituting $E$, defined as in Eq. \eqref{EFgostar}, into Eq. \eqref{E}, one can easily have
\begin{equation}\label{cephi}
\begin{split}&L_\la \eta_{\la}-\la e^{-2i\nu_\la\tau_n} e^{2\alpha m(x)}
u_\la(c_\la
u_\la+\eta_{\la})-2i\nu_\la e^{\alpha m(x)}(c_\la
u_\la+\eta_{\la})\\=&2(\la-\la_*)\la
e^{-i\nu_\la\tau_n}e^{2\alpha m(x)}\psi^2_\la.\end{split}
\end{equation}
Multiplying Eq. \eqref{cephi} by $u_\la$, and integrating the result over $\Omega$, we have
\begin{equation}\label{cephi2}
\begin{split}
&c_\la\left( \la e^{-2i\nu_\la\tau_n}\int_\Om
e^{2\alpha m(x)}u_\la^3dx+2i\nu_\la\int_\Omega e^{\alpha m(x)}u^2_\la dx\right)\\
=&-\la e^{-2i\nu_\la\tau_n}\int_\Om e^{2 \alpha m(x)}u^2_\la\eta_{\la}dx-2i \nu_\la \int_\Omega e^{\alpha m(x)} u_\la\eta_\la dx\\-&2\la
e^{-i\nu_\la\tau_n}(\la-\la_*)\int_\Om
e^{2\alpha m(x)}u_\la\psi^2_\la dx.
\end{split}
\end{equation}
Multiplying Eq. \eqref{cephi} by $\overline\eta_\la$, and integrating the result over $\Omega$, we obtain
\begin{equation}\label{cephi3}
\begin{split}
&\langle \eta_{\la}, L_\la\eta_{\la}\rangle -\la c_\la\int_\Om  e^{2\alpha m(x)}\overline\eta_{\la} u^2_\la dx e^{-2i\nu_\la\tau_n}-2i\nu_\la c_\la\int_\Om e^{\alpha m(x)}u_\la\overline\eta_\la dx\\
=&\la\int_\Om e^{2\alpha m(x)}u_\la|\eta_{\la}|^2 dxe^{-2i\nu_\la\tau_n}+2i\nu_\la\int_\Om e^{\alpha m(x)} |\eta_{\la}|^2 dx\\+&2\la
e^{-i\nu_\la\tau_n}(\la-\la_*)\int_\Om e^{2\alpha m(x)}\overline\eta_{\la}\psi^2_\la dx.
\end{split}
\end{equation}
It follows from the expression of $\nu_\la$, $u_\la$, $\psi_\la$ and $\tau_n$ that
\begin{equation}\label{lst}
\begin{split}
&\psi_\la\to\phi,\;\;u_\la/(\la-\la_*)\to \beta_{\la_*}\phi \;\;\text{in}\;\;  C(\overline \Omega),\\
&\nu_\la/(\la-\la_*)\to h_{\la_*},\;\;
\nu_\la\tau_n\to \ds\f{\pi}{2}+2n\pi.
\end{split}
\end{equation}
From Eqs.
\eqref{cephi2} and \eqref{lst}, we see that there exist constants $\tilde\la>\la_*$ and $M_0, M_1>0$
such that for, any $\la\in(\la_*,\tilde\la)$,
\begin{equation}
\begin{split}
&|(\la-\la_*)c_\la|\le
M_0\|\eta_{\la}\|_{Y_\mathbb{C}}+M_1.
\end{split}
\end{equation}
This, combined with Eqs. \eqref{cephi3} and \eqref{lst}, implies that there exist constants $M_2,~M_3>0$
such that for any $\la\in(\la_*,\tilde\la)$,
\begin{equation*}
\begin{split}
|\la_2(\la)|\cdot \|\eta_{\la}\|_{Y_{\mathbb{C}}}^2\le
(\la-\la_*)M_2\|\eta_{\la}\|^2_{Y_\mathbb{C}}+M_3(\la-\la_*)\|\eta_{\la}\|_{Y_\mathbb{C}},
\end{split}
\end{equation*}
where $\la_2(\la)$ is the second eigenvalue of $-L_\la$. Since
$\lim_{\la\to\la_*}\la_{2}(\la)=\la_{2}>0$, where $\la_{2}$, defined as in Lemma \ref{lem21}, is
the second eigenvalue of $-L$, we have
$\lim_{\la\to\la_*}\|\eta_{\la}\|_{Y_\mathbb{C}}=0$. This, together with
\eqref{cephi2}, implies
$$\lim_{\la\to\la_*}(\la-\la_*)c_\la=\ds\f{2i}{\beta^2_{\la_*}(2i-1)}.$$
\end{proof}

Therefore, by similar arguments to \cite{Chen2012}, one can easily check
\begin{equation}\label{glas}
\begin{split}
&\lim_{\la\to \la_*}(\la-\la_*)g_{11}=0,\\
&\lim_{\la\to
\la_*}\mathcal{R}e[(\la-\la_*)^2g_{21}]<0.
\end{split}
\end{equation}
It is well-known that the real part of the following quantity determines the direction and
stability of bifurcating periodic orbits (see \cite{hassard1981theory,wu1996theory}):
\begin{equation*}
C_1(0)=\dfrac{i}{2\nu_\la\tau_n}\left(g_{11}g_{20}-2|g_{11}|^2
-\dfrac{|g_{02}|^2}{3}\right)+\dfrac{g_{21}}{2}.
\end{equation*}
It follows from Eq. \eqref{glas} that $\lim_{\la\to
\la_*}\mathcal{R}e[(\la-\la_*)^2C_1(0)]<0$. Hence we have the
following result.
\begin{theorem}\label{T3.3}
For $\lambda\in(\lambda_*,\lambda^*]$, where $\la^*-\la_*\ll 1$, let
$\tau_n(\la)$ be the Hopf bifurcation points of Eq. \eqref{delay} obtained in Theorem \ref{c25}.
Then for each $n\in \mathbb N \cup \{0\}$, the direction of the
Hopf bifurcation at $\tau=\tau_n$ is forward and the bifurcating periodic solution
from $\tau=\tau_0$ is orbitally asymptotically stable.
\end{theorem}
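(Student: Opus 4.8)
The plan is to invoke the classical Poincar\'e--Andronov--Hopf normal form machinery as formulated for partial functional differential equations in \cite{hassard1981theory,wu1996theory}, in which the direction of the bifurcation and the stability of the bifurcating periodic orbit are governed entirely by the sign of $\mathcal{R}e[C_1(0)]$ together with the sign of the transversality derivative. Specifically, with
$$\mu_2 = -\f{\mathcal{R}e[C_1(0)]}{\mathcal{R}e[\mu'(\tau_n)]}, \qquad \beta_2 = 2\,\mathcal{R}e[C_1(0)],$$
the bifurcation at $\tau=\tau_n$ is forward (supercritical) precisely when $\mu_2>0$, and the bifurcating periodic orbit is orbitally asymptotically stable when $\beta_2<0$ provided no other eigenvalue of $A_{\tau_n}(\la)$ lies in the open right half-plane. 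So the proof reduces to pinning down two signs and then checking the side condition.

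First I would assemble the two sign ingredients already available in the excerpt. Theorem \ref{thm35} supplies $\mathcal{R}e[\mu'(\tau_n)]=\f{d\mathcal{R}e[\mu(\tau_n)]}{d\tau}>0$ for every $n$ and every $\la\in(\la_*,\tilde\la^*]$. On the other hand, the asymptotic computation \eqref{glas}, which rests on Lemma \ref{comf}, gives $\lim_{\la\to\la_*}\mathcal{R}e[(\la-\la_*)^2 C_1(0)]<0$. Since $(\la-\la_*)^2>0$, this forces $\mathcal{R}e[C_1(0)]<0$ once $\la$ is taken close enough to $\la_*$; this is exactly why the restriction $0<\la^*-\la_*\ll 1$ appears, and it may require shrinking $\tilde\la^*$ down to the stated $\la^*$. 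Combining these facts yields $\mu_2>0$ for all $n$, so the bifurcation is forward at each $\tau_n$, and $\beta_2=2\,\mathcal{R}e[C_1(0)]<0$.

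To upgrade $\beta_2<0$ into genuine orbital asymptotic stability I would restrict to $n=0$ and appeal to the eigenvalue-distribution result Theorem \ref{thm36}, together with Theorem \ref{thm37}. For $\tau$ slightly below $\tau_0$ the generator $A_\tau(\la)$ has no eigenvalue with positive real part, so $u_\la$ is stable and the only critical eigenvalues at $\tau_0$ are the crossing pair $\pm i\nu_\la$; hence $\beta_2<0$ indeed certifies stability of the orbit emanating from $\tau_0$. By contrast, for $n\ge 1$ Theorem \ref{thm36} guarantees that $A_{\tau_n}(\la)$ already possesses $2n\ge 2$ eigenvalues in the open right half-plane at $\tau=\tau_n$, so the orbits bifurcating there inherit these unstable directions and cannot be orbitally asymptotically stable, which is why the stability assertion is stated only for $\tau_0$.

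The genuinely delicate step -- the evaluation of $\mathcal{R}e[C_1(0)]$ in the singular limit $\la\to\la_*$, where $C_1(0)$ itself blows up like $(\la-\la_*)^{-2}$ while its scaled real part converges to a negative constant -- has in fact already been carried out via Lemma \ref{comf} and \eqref{glas}. What remains for this theorem is therefore essentially the bookkeeping of signs, the correct invocation of the normal-form stability criterion, and the observation that orbital stability can hold only at the first crossing $\tau_0$; I would expect no further analytical obstacle beyond ensuring that the near-criticality needed for $\mathcal{R}e[C_1(0)]<0$ is compatible with the range $(\la_*,\la^*]$ already fixed in the statement.
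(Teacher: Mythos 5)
Your proposal is correct and follows essentially the same route as the paper: the paper likewise reduces the theorem to the sign of $\mathcal{R}e[C_1(0)]$ combined with the transversality condition of Theorem \ref{thm35}, deducing $\lim_{\la\to\la_*}\mathcal{R}e[(\la-\la_*)^2C_1(0)]<0$ from \eqref{glas} (which in turn rests on Lemma \ref{comf}) and then invoking the standard normal-form criterion of \cite{hassard1981theory,wu1996theory}. Your additional remark that orbital stability can only be asserted at $\tau_0$ because of the $2n$ unstable eigenvalues present at $\tau_n$ for $n\ge 1$ (Theorem \ref{thm36}) is consistent with, and makes explicit, why the paper restricts the stability claim to the first bifurcation point.
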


\section{No-flux boundary condition and simulation}

In this section, we discussion model \eqref{spp} with no-flux boundary condition, that is,
\begin{equation}\label{sppn}
\begin{cases}
\ds\frac{\partial u(x,t)}{\partial t} =\nabla\cdot[d\nabla u-au\nabla m]+
u(x,t)\left[m(x)-u(x,t-r)\right],&x\in\Om,\ t>0,\\
d\partial_n u-au\partial_n m=0&  x\in\partial\Om,\ t>0,\\
\end{cases}
\end{equation}
where $n$ is the outward unit normal vector on $\partial\Omega$, and $\partial_n u=\nabla u \cdot n$.
As in Eq. \eqref{spp}, we also derive an equivalent model of Eq. \eqref{sppn} as follows:
\begin{equation}\label{delay2}
\begin{cases}
\ds\frac{\partial v}{\partial t} =e^{-\alpha m(x)}\nabla\cdot[e^{\alpha m(x)}\nabla v]+
\la v\left[m(x)-e^{\alpha m(x)}v(x,t-\tau)\right],&x\in\Om,\ t>0,\\
\partial_nv=0,&  x\in\partial\Om,\ t>0.\\
\end{cases}
\end{equation}
Here $m(x)$ satisfies the following assumption:
\begin{enumerate}
\item[$\mathbf{(A_2)}$] $m(x)\in C^2(\overline\Omega)$, $\max_{x\in\overline\Omega}m(x)>0$, and $\int_\Omega m(x) e^{\alpha m(x)}dx<0$; or
    \item[$\mathbf{(A_3)}$] $m(x)\in C^2(\overline\Omega)$, and $\int_\Omega m(x) e^{\alpha m(x)}dx>0$.
\end{enumerate}
Then the following discussion is divided into two cases.
\subsection{Case I}

In this case, $m(x)$ satisfies assumption $\mathbf{(A_2)}$. The method used for this case is similar to that for Dirichlet problem \eqref{delay}. In fact, it follows from \cite{Belgacem1995} that the following problem \begin{equation}\label{prp}
\begin{cases}
-e^{-\alpha m(x)}\nabla\cdot[e^{\alpha m(x)}\nabla v]=-\Delta v-\alpha\nabla m\cdot\nabla v=\la  m(x) v,&x\in\Omega,\\
\partial_n v=0,&x\in\partial\Om,
\end{cases}
\end{equation}
has a unique positive principal eigenvalue $\la_*$, and  model \eqref{delay2} admits a unique positive steady state $u_\la$ for $\la>\la_*$, if $m(x)$ satisfies assumption $\mathbf{(A_2)}$.
Moreover, we comment that the relation between $\la_*$ and $\alpha$ was also investigated in \cite{Cosner2003}: if
$\int_\Omega m(x)dx\ge0$, then $\la_*(\alpha)=0$ for all $\alpha\ge0$; and if $m(x)$ change sign and $\int_\Omega m(x)dx<0$, then there is a unique $\alpha_*>0$ such that $\la_*(\alpha)>0$ for $0<\alpha<\alpha_*$, and $\la_*(\alpha)=0$ for $\alpha>\alpha_*$.

Then, by similar arguments to Sections 2 and 3, we have the following results on model
\eqref{delay2}.
\begin{theorem}
Assume that $m(x)$ satisfies assumption $\mathbf{(A_2)}$. Then, for $\lambda\in(\lambda_*,\lambda^*]$, where $\la^*-\la_*\ll 1$, there exists a sequence $\{\tau_n\}_{n=0}^\infty$ such that
the positive steady state $u_\lambda$ of Eq. \eqref{delay2}
is locally asymptotically stable when $\tau\in[0,\tau_{0})$,
unstable when $\tau\in(\tau_{0},\infty)$, and system \eqref{delay2} occurs Hopf bifurcation at the positive steady state $u_{\la}$ when $\tau=\tau_n$,
$(n=0,1,2,\cdots)$. Moreover, the direction of the Hopf bifurcation at $\tau=\tau_n$ is forward and the bifurcating periodic solution from $\tau=\tau_0$ is orbitally asymptotically stable.
\end{theorem}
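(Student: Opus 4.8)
The plan is to repeat, essentially verbatim, the analysis of Sections 2 and 3, the only structural change being the boundary condition. The crucial observation is that the state operator is written in the weighted divergence form $e^{-\alpha m(x)}\nabla\cdot[e^{\alpha m(x)}\nabla\,\cdot\,]$, so that the no-flux condition $\partial_n v=0$ makes all boundary terms in the relevant Green's identities vanish, exactly as the Dirichlet condition $v=0$ did. Concretely, I would replace the working space $X=H^2(\Om)\cap H^1_0(\Om)$ by $X=\{v\in H^2(\Om):\partial_n v=0\text{ on }\partial\Om\}$, take $\phi$ to be the positive principal eigenfunction of the Neumann problem \eqref{prp} furnished by \cite{Belgacem1995}, and define $L$, $X_1$, $Y_1$ by the same formulas as in \eqref{LL}. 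Since under $\mathbf{(A_2)}$ one has $\la_*>0$, the eigenfunction $\phi$ is non-constant, whence $\int_\Om e^{\alpha m(x)}|\nabla\phi|^2dx>0$; the identity \eqref{pos} still holds because $\partial_n\phi=0$, so $\beta_{\la_*}$ remains well defined and positive, and Theorem \ref{Tsteady} reproduces the steady-state profile \eqref{steady1} through the implicit function theorem.

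For the characteristic equation I would keep the generator $A_\tau(\la)$ and the criterion $\Delta(\la,\mu,\tau)\psi=0$ unchanged. Every estimate in Lemmas \ref{nu}, \ref{lem21}, \ref{l25} and in Theorems \ref{cha}, \ref{c25} is obtained by multiplying the equation by $e^{\alpha m(x)}\overline\psi$ (or $e^{\alpha m(x)}\phi$) and integrating; the key cancellation $\langle\psi,\nabla\cdot[e^{\alpha m(x)}\nabla\psi]\rangle=-\int_\Om e^{\alpha m(x)}|\nabla\psi|^2dx$ survives under Neumann conditions, and $L$ remains self-adjoint for $\langle\cdot,\cdot\rangle$, so the spectral gap of Lemma \ref{lem21} persists with $\la_2$ now the second Neumann eigenvalue of $-L$. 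This yields the unique purely imaginary family $i\nu_\la=i(\la-\la_*)h_\la$ with $\tau_n=(\theta_\la+2n\pi)/\nu_\la$. The simplicity (Theorem \ref{thm34a}, via $S_n(\la)\neq0$ from Lemma \ref{thm34}) and the transversality $d\mathcal{R}e[\mu(\tau_n)]/d\tau>0$ (Theorem \ref{thm35}) then follow, since they depend only on the same weighted integrals whose $\la\to\la_*$ limits are computed as before. Theorem \ref{thm36} gives the eigenvalue count, and hence stability for $\tau<\tau_0$, instability for $\tau>\tau_0$, and Hopf bifurcation at each $\tau_n$.

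For the direction and stability of the bifurcating orbits I would re-run the normal-form computation of Section 3. It is built on the formal duality \eqref{bil} with the weighted product $\langle\cdot,\cdot\rangle_1$, and the two-sided norm equivalence $e^{\alpha\min_\Om m}\langle v,v\rangle\le\langle v,v\rangle_1\le e^{\alpha\max_\Om m}\langle v,v\rangle$ holds for every $m$; thus the adjoint operator, the coefficients $g_{ij}$, and Lemma \ref{comf} are reproduced without change, giving $\lim_{\la\to\la_*}\mathcal{R}e[(\la-\la_*)^2C_1(0)]<0$. As in Theorem \ref{T3.3}, this forces the bifurcation at each $\tau_n$ to be forward and the branch emanating from $\tau_0$ to be orbitally asymptotically stable.

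The one step that genuinely uses $\mathbf{(A_2)}$, rather than merely transcribing the Dirichlet proof, is the positivity $\beta_{\la_*}>0$: it requires $\phi$ to be non-constant, equivalently $\la_*>0$, which is exactly what $\int_\Om m(x)e^{\alpha m(x)}dx<0$ (together with the sign change of $m$) guarantees through the cited eigenvalue results. I expect this to be the only delicate point: under $\mathbf{(A_3)}$ one has $\la_*=0$ and $\phi$ constant, so $\int_\Om e^{\alpha m}|\nabla\phi|^2dx=0$, the expression \eqref{al} degenerates, and the whole bifurcation-from-$\la_*$ machinery collapses, which is precisely why that regime must be handled separately in Case II.
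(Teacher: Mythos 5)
Your proposal is correct and follows exactly the route the paper intends: the paper's own ``proof'' of this theorem is precisely the remark that the arguments of Sections 2 and 3 carry over verbatim to the no-flux problem, since the weighted divergence form makes all boundary terms vanish under $\partial_n v=0$ just as under $v=0$. You have in fact supplied more detail than the paper does, and you correctly isolate the one place where $\mathbf{(A_2)}$ is genuinely used (namely $\la_*>0$, $\phi$ non-constant, hence $\beta_{\la_*}>0$), which is also why the paper must treat $\mathbf{(A_3)}$ separately.
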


\subsection{Case II}

Note that assumption  $\mathbf{(A_2)}$  is equivalent to $m(x)$ changing sign,  $\int_\Omega m(x)dx<0$ and $\alpha<\alpha_*$.
Thus $\la_*(\alpha)>0$ under assumption $\mathbf{(A_2)}$. It will be of interest to study the dynamics of system \eqref{delay2}
for $\alpha>\alpha_*$, i.e. to understand the joint effect of strong advection and time delay.
Therefore, in this subsection, we consider the case that $m(x)$ satisfies assumption $\mathbf{(A_3)}$.
It follows from \cite{Cantrell1996,Cosner2003} that, under assumption $\mathbf{(A_3)}$,
the unique positive principal eigenvalue $\la_*(\alpha)$ of problem \eqref{prp} is zero, and the corresponding eigenfunction $\phi$ is constant.
Moreover, for any $\la>0$, system \eqref{delay}
has a unique positive steady state $u_\la$, which is globally asymptotically stable, and $u_\la$ satisfies
\begin{equation}\label{imd}
\lim_{\la\to0}u_\la(x)=\overline m:=\ds\f{\int_\Omega m(x)e^{\alpha m(x)}dx}{\int_\Omega e^{2\alpha m(x)} dx} \;\;\text{  in  }\;\;C^{1+\delta}(\overline\Omega)
\end{equation}
for some $\delta\in(0,1)$. Let $u_0(x)=\overline m$, and then $\la\to u_\la$ is continuous from $[0,\infty)$ to $C^{1+\delta}(\overline\Omega)$.
For simplicity, we choose $\phi\equiv \overline m$, and then $L$, $X_1$ and $Y_1$ (defined in Eqs. \eqref{LL} and \eqref{L}) have the following forms:
\begin{equation*}
\begin{split}
L=&\nabla\cdot[e^{\alpha m(x)}\nabla],\\
X_1=&\left\{y\in X:\int_{\Om} y(x)dx=0\right\},\\
Y_1=&\mathscr{R}\left(L\right)=\left\{y\in Y:\int_{\Om}
y(x)dx=0\right\}.
\end{split}
\end{equation*}

In order to analyze eigenvalue problem \eqref{eigen}, we first
give the following estimates for solutions of \eqref{eigen}.
\begin{lemma}\label{nu2}Assume that $\la\in(0,\la^*]$.
If $(\nu_\la,\theta_\la,\psi_\la)$ solves Eq.
\eqref{eigen} with $\nu_\la>0$, $\theta_\la\in[0,2\pi)$, and $\psi_\la( \ne 0) \in X_{\mathbb{C}}$, then
$\nu_\la/\la$ is bounded for
$\la\in(0,\la^*]$.\end{lemma}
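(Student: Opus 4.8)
The plan is to mirror the computation in the proof of Lemma \ref{nu}, the only structural differences being that the principal eigenvalue has degenerated to $\la_*=0$ (with constant eigenfunction) and that the boundary condition is now of no-flux type. First I would substitute the triple $(\nu_\la,\theta_\la,\psi_\la)$ into Eq. \eqref{eigen}, multiply by $e^{\alpha m(x)}\overline{\psi_\la}$, and integrate over $\Om$. Integrating the leading term by parts,
$$\int_\Om \overline{\psi_\la}\,\nabla\cdot[e^{\alpha m(x)}\nabla\psi_\la]\,dx=\int_{\partial\Om}e^{\alpha m(x)}\overline{\psi_\la}\,\partial_n\psi_\la\,dS-\int_\Om e^{\alpha m(x)}|\nabla\psi_\la|^2\,dx=-\int_\Om e^{\alpha m(x)}|\nabla\psi_\la|^2\,dx,$$
where the boundary term drops out because $\psi_\la$ satisfies the no-flux condition $\partial_n\psi_\la=0$ (in the Dirichlet setting of Lemma \ref{nu} this same term vanished instead because $\psi_\la=0$ on $\partial\Om$). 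Since this quantity is real, taking the imaginary part of the integrated identity annihilates both it and the $\la[m e^{\alpha m}-e^{2\alpha m}u_\la]$ term, leaving precisely the analogue of Eq. \eqref{nues}:
$$\nu_\la\int_\Om e^{\alpha m(x)}|\psi_\la|^2\,dx=\la\sin\theta_\la\int_\Om e^{2\alpha m(x)}u_\la|\psi_\la|^2\,dx.$$

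Next I would solve for the ratio and estimate it crudely. Using $|\sin\theta_\la|\le1$, $u_\la>0$, and the pointwise inequality $e^{2\alpha m(x)}=e^{\alpha m(x)}e^{\alpha m(x)}\le e^{\alpha m(x)}\max_{\cOm}e^{\alpha m}$, the common factor $\int_\Om e^{\alpha m(x)}|\psi_\la|^2\,dx$ cancels and one obtains
$$\frac{\nu_\la}{\la}=\frac{\sin\theta_\la\int_\Om e^{2\alpha m(x)}u_\la|\psi_\la|^2\,dx}{\int_\Om e^{\alpha m(x)}|\psi_\la|^2\,dx}\le\|u_\la\|_{C(\cOm)}\,\max_{\cOm}e^{\alpha m}.$$
Thus the claim is reduced to a uniform bound on $\|u_\la\|_{C(\cOm)}$ for $\la\in(0,\la^*]$.

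Finally, this uniform bound follows from the convergence already recorded in Eq. \eqref{imd}: since $\la\mapsto u_\la$ is continuous from $[0,\infty)$ into $C^{1+\delta}(\cOm)$ with $u_0\equiv\overline m$, its restriction to the compact interval $[0,\la^*]$ has bounded image in $C^{1+\delta}(\cOm)$, which embeds continuously into $C(\cOm)$; hence $\sup_{\la\in(0,\la^*]}\|u_\la\|_{C(\cOm)}<\infty$ and $\nu_\la/\la$ is bounded. I do not anticipate a serious obstacle. The one conceptual point distinguishing this from Lemma \ref{nu} is that $u_\la$ no longer decays like $\la-\la_*$ as $\la\to\la_*$; instead it converges to the nonzero constant $\overline m$, so boundedness of $u_\la$ itself (rather than of $u_\la/\la$) is exactly what produces the correct normalization $\nu_\la/\la$. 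The degeneracy $\la_*=0$ is therefore a simplification rather than a difficulty, and the only step requiring care is that the vanishing of the boundary integral now rests on the Neumann-type condition instead of the Dirichlet one.
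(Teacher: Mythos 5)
Your proposal is correct and follows essentially the same route as the paper: the paper likewise quotes the identity \eqref{nues}, bounds $\nu_\la/\la$ by $e^{\alpha\max_\Omega m(x)}\|u_\la\|_\infty$, and concludes from the continuity of $\la\mapsto\|u_\la\|_\infty$ on $[0,\la^*]$. The only difference is that you spell out the re-derivation of \eqref{nues} under the Neumann boundary condition (where the boundary term vanishes for a different reason), which the paper leaves implicit.
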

\begin{proof}
It follows from Eq. \eqref{nues} that
$$\nu_\la/\la=\ds\f{\sin\theta_\la \int_\Omega e^{2\alpha m(x)}u_\la|\psi_\la|^2dx}{\int_\Omega e^{\alpha m(x)}|\psi_\la|^2dx}\le e^{\alpha\max_\Omega m(x)}\|u_\la\|_\infty.$$
Then, from the continuity of
$\la\mapsto\|u_{\la}\|_{\infty}$, we see that $\ds
\nu_\la/\la$ is bounded for $\la\in(0,\la^*]$.
\end{proof}
We remark that Lemma \ref{lem21} still holds for the case that $L=\nabla\cdot[e^{\alpha m(x)}\nabla]$.
Now, for $\la\in(0,\la^*]$, letting
\begin{equation}
\label{eigen22}
\begin{split}
&\psi= r\overline m+\la z,\;\;\;\; z\in (X_1)_{\mathbb{C}},\ \ \ \ r\geq0, \\
 &\|\psi\|^2_{Y_{\mathbb{C}}}=r^2\overline m^2|\Omega|
 +\la^2\|z\|^2_{Y_{\mathbb{C}}}=\overline m^2|\Omega|,
 \end{split}
 \end{equation}
and substituting \eqref{eigen22} and $\nu=\la h$
into Eq. \eqref{eigen}, we see that $(\nu,\theta,\psi)$ solves Eq. \eqref{eigen}, where $\nu>0$, $\theta\in[0,2\pi)$ and $\psi\in X_{\mathbb{C}}(\|\psi\|^2_{Y_{\mathbb{C}}}=\|\phi\|^2_{Y_{\mathbb{C}}})$,
if and only if the following system:
\begin{equation}\label{g12}
\begin{cases}\tilde g_1(z,r,h,\theta,\la):=\nabla\cdot[e^{\alpha m(x)}\nabla z]+e^{\alpha m(x)}\left[m(x)-e^{\alpha m(x)}u_\la\right](r\overline m+\la z)\\
-e^{2\alpha m(x)}u_\la (r\overline m+\la z) e^{-i\theta}-ihe^{\alpha m(x)}(r\overline m+\la z)=0\\
\tilde  g_2(z,r,\la):=(r^2-1)\overline m^2|\Omega|+\la^2\|z\|^2_{Y_{\mathbb{C}}}=0
\end{cases}
\end{equation}
Define
$\tilde G:(X_1)_{\mathbb C}\times \mathbb{R}^4\to
Y_{\mathbb C}\times \mathbb{R}$ by $\tilde G=(g_1,g_2)$, and we see that
$\tilde G(z,r,h,\theta,\la)=0$ is also uniquely solvable for $\la=0$.
\begin{lemma}\label{l252}
The following equation \begin{equation}\label{3.5G2}
\begin{cases}
\tilde G(z,r,h,\theta,0)=0\\
z\in (X_1)_{\mathbb{C}},\;h>0\;r\ge0,\; \theta\in[0,2\pi)\\
\end{cases}
\end{equation} has a unique solution $(z_{0},r_{0},h_{0},\theta_{0})$. Here
\begin{equation}\label{lastar2}
    r_{0}=1,\;\;\theta_{0}=\pi/2,\;\;h_{0}=\ds\f{\int_\Omega m(x)e^{\alpha m(x)}dx}{\int_\Omega e^{\alpha m(x)}dx},
\end{equation}
and $z_{0}\in(X_1)_{\mathbb C}$ is the unique solution of
\begin{equation}\label{lastari2}
\begin{split}
-\nabla\cdot[e^{\alpha m(x)}\nabla z]=&e^{\alpha m(x)}\left[m(x)-e^{\alpha m(x)}\overline m\right]\overline m
-e^{2\alpha m(x)}\overline m^2 e^{-i\theta_0}-ih_0e^{\alpha m(x)}\overline m.\\
\end{split}
\end{equation}

\end{lemma}
\begin{proof}
From Eq. \eqref{g12}, we see that $\tilde g_2(z,r,0)=0$ if and only if $r=r_{0}=1$. Note that
\begin{equation}
\begin{split}
\tilde g_1(z,r_{0},h,\theta,0)=&\nabla\cdot[e^{\alpha m(x)}\nabla z]+e^{\alpha m(x)}\left[m(x)-e^{\alpha m(x)}\overline m\right]\overline m\\
-&e^{2\alpha m(x)}\overline m^2 e^{-i\theta}-ihe^{\alpha m(x)}\overline m=0\\
\end{split}
\end{equation}
Then \begin{equation*}
\begin{cases}
\tilde g_1(z,r_0,h,\theta,0)=0\\
z\in (X_1)_{\mathbb{C}},\;h>0\;r\ge0,\; \theta\in[0,2\pi)\\
\end{cases}
\end{equation*} is solvable if and only if
\begin{equation}\label{cost2}
\begin{cases}
\overline m^2\int_\Omega e^{2\alpha m(x)}dx\sin\theta=h\overline m\int_\Omega e^{\alpha m(x)}dx
\\
\overline m^2\int_\Omega e^{2\alpha m(x)}dx\cos\theta=0\\
\end{cases}
\end{equation}
is solvable for a pair $(\theta,h)$ with $h>0$ and $\theta\in[0,2\pi)$.
Noticing that $$\overline m=\ds\f{\int_\Omega m(x)e^{\alpha m(x)}dx}{\int_\Omega e^{2\alpha m(x)} dx},$$  we have
\begin{equation}\label{hthe2}
\theta=\theta_{0}=\pi/2,\;\;h=h_{0}=\ds\f{\int_\Omega m(x)e^{\alpha m(x)}dx}{\int_\Omega e^{\alpha m(x)}dx}.
\end{equation}
Consequently, $\tilde g_1(z,r_{0},h_{0},\theta_{0},0)=0$ has a unique solution $z_{0}$, which satisfies Eq. \eqref{lastari2}.
\end{proof}
Then, we also have the following result on the solvability of  $\tilde G=0$ for $\la\in(0,\la^*]$.
\begin{theorem}\label{cha2}
There exist $\tilde \la^*>0$ and a continuously differentiable mapping
$\la\mapsto(z_\la,r_\la,h_\la,\theta_\la)$ from
$[0,\tilde \la^{*}]$ to $(X_1)_{\mathbb C}\times \mathbb{R}^3$ such that
$\tilde G(z_\la,r_\la,r_\la,\theta_\la,\la)=0$. Moreover, for $\la\in[0,\tilde \la^{*}]$,
\begin{equation}\label{3.6G2} \begin{cases}
\tilde G(z,r,h,\theta,\la)=0\\
z\in (X_1)_{\mathbb{C}},\;h,\;r\ge0, \;\theta\in[0,2\pi)\\
\end{cases}
\end{equation}
 has a unique solution $(z_\la,r_\la,h_\la,\theta_\la)$.
\end{theorem}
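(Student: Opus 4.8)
The plan is to mirror the proof of Theorem~\ref{cha} almost verbatim, relocating the base point from $\la=\la_*$ to $\la=0$ and replacing the eigenfunction $\phi$ by the constant $\overline m$. Lemma~\ref{l252} already supplies the unique solution $(z_0,r_0,h_0,\theta_0)=(z_0,1,h_0,\pi/2)$ of $\tilde G(z,r,h,\theta,0)=0$, so the task splits into two parts: apply the implicit function theorem at this base point to obtain the $C^1$ branch, and then promote the resulting \emph{local} uniqueness to the \emph{global} uniqueness on $[0,\tilde\la^*]$ asserted in \eqref{3.6G2}.

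First I would compute the Fr\'echet derivative $T=(T_1,T_2)$ of $\tilde G$ with respect to $(z,r,h,\theta)$ at $(z_0,r_0,h_0,\theta_0,0)$. Differentiating $\tilde g_1,\tilde g_2$ in \eqref{g12} and inserting $r_0=1$, $\theta_0=\pi/2$, $\la=0$ (so that $u_0=\overline m$ and the $\la z$ terms drop out), one obtains
\begin{equation*}
\begin{split}
T_1(\chi,\kappa,\ep,\vartheta)=&\nabla\cdot[e^{\alpha m(x)}\nabla\chi]-i\ep\, e^{\alpha m(x)}\overline m+\vartheta\, e^{2\alpha m(x)}\overline m^2\\
&+\kappa\, e^{\alpha m(x)}\overline m\left[m(x)-e^{\alpha m(x)}\overline m-ih_0+i\,e^{\alpha m(x)}\overline m\right],\\
T_2(\kappa)=&2\kappa\,\overline m^2|\Om|,
\end{split}
\end{equation*}
which is the exact analogue of the operator $T$ appearing in Theorem~\ref{cha}. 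To see that $T$ is a bijection from $(X_1)_{\mathbb C}\times\mathbb{R}^3$ onto $Y_{\mathbb C}\times\mathbb{R}$, note that $T_2$ determines $\kappa$ uniquely, while the leading second-order operator $L=\nabla\cdot[e^{\alpha m(x)}\nabla]$ contained in $T_1$ maps $(X_1)_{\mathbb C}$ bijectively onto its range $\mathscr R(L)$ (its kernel on the full space $X_{\mathbb C}$ is spanned by the constant $\overline m$, which does not lie in $(X_1)_{\mathbb C}$). The two remaining real parameters $\ep$ and $\vartheta$ are then fixed by the two real solvability conditions obtained from projecting $T_1(\chi,\kappa,\ep,\vartheta)$ onto $\mathscr N(L)=\mathrm{span}\{\overline m\}$; since $\ep$ enters through a purely imaginary coefficient and $\vartheta$ through a real one, these conditions decouple exactly as in \eqref{cost2}, giving unique $\ep,\vartheta$. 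The implicit function theorem then yields $\tilde\la^*>0$ and a $C^1$ map $\la\mapsto(z_\la,r_\la,h_\la,\theta_\la)$ solving $\tilde G=0$ on $[0,\tilde\la^*]$.

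For the global uniqueness I would repeat the precompactness argument of Theorem~\ref{cha}. Given any solution $(z^\la,r^\la,h^\la,\theta^\la)$ of \eqref{3.6G2}, Lemma~\ref{nu2} together with $\tilde g_2=0$ and the convergence $u_\la\to\overline m$ in $C(\cOm)$ from \eqref{imd} shows that $\{h^\la\}$, $\{r^\la\}$ and $\{\theta^\la\}$ remain bounded as $\la\to0$. To control $\{z^\la\}$ I would pair $\tilde g_1=0$ with $z^\la$ and invoke the coercivity estimate $|\langle Lz,z\rangle|\ge\la_2\|z\|^2_{Y_{\mathbb C}}$ of Lemma~\ref{lem21}, valid here for $L=\nabla\cdot[e^{\alpha m(x)}\nabla]$; after shrinking $\tilde\la^*$ to absorb the $O(\la)$ contribution into the left-hand side, this bounds $\{z^\la\}$ in $Y_{\mathbb C}$ and, since $L^{-1}$ is bounded, in $(X_1)_{\mathbb C}$. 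Extracting a convergent subsequence and passing to the limit in $L^{-1}\tilde g_1=0$ produces a solution of $\tilde G(\,\cdot\,,0)=0$, which by Lemma~\ref{l252} must coincide with $(z_0,r_0,h_0,\theta_0)$; hence every solution converges to the base point and therefore equals the branch produced above.

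The step I expect to be the main obstacle is, as in the Dirichlet case, the boundedness of $\{z^\la\}$, whose success hinges on the coercivity of $L=\nabla\cdot[e^{\alpha m(x)}\nabla]$ on the complement $(X_1)_{\mathbb C}$ of its kernel. The new subtlety is that here $0$ is genuinely the \emph{first} eigenvalue of $-L$, with constant eigenfunction, so Lemma~\ref{lem21} must be read with $\la_2>0$ the \emph{second} eigenvalue of $-L$ and with the constraint $\int_\Om z\,dx=0$ playing the role of the orthogonality $\langle\phi,z\rangle=0$ used in the Dirichlet setting. Once this coercivity is granted, the remaining limiting computations are routine perturbations of those in Theorem~\ref{cha}.
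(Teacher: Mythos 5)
Your proposal is correct and follows essentially the same route as the paper: the paper likewise computes the Fr\'echet derivative $\tilde T$ at $(z_0,r_0,h_0,\theta_0,0)$, verifies it is a bijection from $(X_1)_{\mathbb C}\times\mathbb{R}^3$ to $Y_{\mathbb C}\times\mathbb{R}$ (via $\tilde T_2$ forcing $\kappa=0$ and then reading off $\epsilon,\vartheta,\chi$), and invokes the implicit function theorem, while for global uniqueness it simply states that the precompactness argument of Theorem~\ref{cha} carries over. Your write-up fills in exactly the details the paper omits (the projection onto $\mathscr N(L)=\mathrm{span}\{\overline m\}$ for surjectivity and the adapted coercivity estimate from Lemma~\ref{lem21} for the bound on $\{z^\la\}$), so no discrepancy to report.
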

\begin{proof}
Let $\tilde T=(\tilde T_1,\tilde T_2):(X_1)_{\mathbb C}\times \mathbb{R}^3\mapsto
Y_{\mathbb C}\times \mathbb{R}$ be the Fr\'echet derivative of $\tilde G$ with respect to
$(z,r,h,\theta)$ at $(z_{0},r_{0},h_{0},\theta_{0},0)$.
An easy calculation yields
\begin{equation*}
\begin{split}
\tilde T_1(\chi,\kappa,\epsilon,\vartheta)=&
\nabla\cdot[e^{\alpha m(x)}\nabla z]+\kappa e^{\alpha m(x)}\left[m(x)-e^{\alpha m(x)}\overline m\right]\overline m
-\kappa e^{2\alpha m(x)}\overline m^2 e^{-i\theta_0}\\-&i\kappa h_0e^{\alpha m(x)}\overline m-i\epsilon e^{\alpha m(x)}\overline m+\vartheta e^{2\alpha m(x)}\overline m^2,\\
\tilde T_2(\kappa)=&2\kappa\overline m^2|\Omega|.
\end{split}
\end{equation*}
Then, we check that $\tilde T$ is
a bijection from $(X_1)_{\mathbb C}\times \mathbb{R}^3$ to $Y_{\mathbb
C}\times \mathbb{R}$, and we only need to verify that $T$ is an injective mapping. If $\tilde T_2(\kappa)=0$, then $\kappa=0$, and substituting $\kappa=0$ into $\tilde T_1(\chi,\kappa,\epsilon,\vartheta)=0$, we obtain $\vartheta=\epsilon=0$. Therefore, $T$ is an
an injection. It follows from the implicit function theorem that
there exist $\tilde \la^*>0$ and a continuously differentiable mapping
$\la\mapsto(z_\la,r_\la,h_\la,\theta_\la)$ from
$[0,\tilde\la^{*}]$ to $X_{\mathbb C}\times \mathbb{R}^3$ such that
$\tilde G(z_\la,r_\la,h_\la,\theta_\la,\la)=0$. By the arguments similar to Lemma \ref{cha}, the uniqueness can be proved, and here we omit the proof.
\end{proof}
Summarizing the above result, we have the following result.
\begin{theorem}\label{c252}
For each $\la\in(0,\tilde \la^*],$ the
following equation
\begin{equation*}
\begin{cases}
\Delta(\la,i\nu,\tau)\psi=0\\
\nu\geq0,\;\tau\ge0,\;\psi (\ne 0) \in X_{\mathbb C}\\
\end{cases}
\end{equation*}
has a solution $(\nu,\tau,\psi)$, if and
only if
\begin{equation}\label{par2}
\nu=\nu_\la=\la h_\la,\;\psi= c \psi_\la,\;
\tau=\tau_{n}=\frac{\theta_\la+2n\pi}{\nu_\la},\;\; n=0,1,2,\cdots,
\end{equation}
where $\psi_\la=r_\la\overline m+\la z_\la$,
$c$ is a nonzero constant, and
$z_\la,r_\la,h_\la,\theta_\la$ are defined as in Theorem
\ref{cha2}.
\end{theorem}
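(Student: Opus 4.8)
The plan is to read the statement off from the unique solvability of $\tilde G = 0$ in Theorem~\ref{cha2}, by tracking the correspondence between purely imaginary eigenvalues of $A_\tau(\la)$ and solutions of the reduced system \eqref{g12}, exactly as in the passage from Theorem~\ref{cha} to Theorem~\ref{c25}. Recall that $i\nu$ (with $\nu \ge 0$) is an eigenvalue of $A_\tau(\la)$ precisely when \eqref{eigen} admits a nontrivial $\psi \in X_{\mathbb C}$, and that in \eqref{eigen} the delay enters only through the phase $\theta$, tied to $\tau$ by $e^{-i\nu\tau} = e^{-i\theta}$, i.e. $\nu\tau \equiv \theta \pmod{2\pi}$.

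First I would rule out $\nu = 0$: setting $\mu = 0$ in \eqref{triangle} collapses the delay term, and the resulting operator coincides (up to the positive factor $e^{\alpha m(x)}$) with the linearization of the steady-state problem \eqref{steady} at $u_\la$; since under $\mathbf{(A_3)}$ the positive steady state $u_\la$ is globally asymptotically stable, hence nondegenerate, this operator is invertible and $\nu = 0$ produces no nontrivial $\psi$. Thus every solution has $\nu > 0$. Next, through the substitution set up in \eqref{eigen22}--\eqref{g12} --- writing $\psi = r\overline m + \la z$ with $z \in (X_1)_{\mathbb C}$, $r \ge 0$, and $\nu = \la h$, and imposing the normalization $\|\psi\|^2_{Y_{\mathbb C}} = \overline m^2|\Omega|$ --- the solvability of \eqref{eigen} becomes equivalent to the solvability of $\tilde G(z, r, h, \theta, \la) = 0$. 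The homogeneity of \eqref{eigen} in $\psi$ lets me first rescale by a complex constant $c$ so as to make the $\overline m$-component real and nonnegative and to fix the norm, which accounts for the free constant $c$ in \eqref{par2}. Invoking Theorem~\ref{cha2}, for $\la \in (0, \tilde\la^*]$ this reduced system has the unique solution $(z_\la, r_\la, h_\la, \theta_\la)$, which translates back to $\nu_\la = \la h_\la > 0$ and the normalized eigenfunction $\psi_\la = r_\la\overline m + \la z_\la$. Finally, substituting $\theta = \theta_\la$ into $\nu_\la\tau \equiv \theta_\la \pmod{2\pi}$ and solving for $\tau \ge 0$ yields exactly $\tau = \tau_n = (\theta_\la + 2n\pi)/\nu_\la$, $n = 0, 1, 2, \ldots$; this gives the ``if'' direction (each such triple solves \eqref{eigen}) and, via the uniqueness in Theorem~\ref{cha2}, the ``only if'' direction.

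Since all of the genuine analysis --- the implicit-function-theorem construction and the continuation estimates --- is already carried out in Theorem~\ref{cha2} and Lemma~\ref{l252}, the remaining task is essentially bookkeeping, and I expect no essential obstacle beyond it. The points needing care are keeping the scaling freedom (the constant $c$) and the $2\pi$-periodicity of $\nu_\la\tau$ (the index $n$) straight, and confirming that $h_\la > 0$ and $\theta_\la \in [0, 2\pi)$ persist on all of $(0, \tilde\la^*]$. The latter follows from continuity together with the explicit endpoint values $\theta_0 = \pi/2$ and $h_0 > 0$ from Lemma~\ref{l252} --- the positivity of $h_0$ coming from $\int_\Omega m(x) e^{\alpha m(x)}\,dx > 0$ under $\mathbf{(A_3)}$ --- after shrinking $\tilde\la^*$ if necessary.
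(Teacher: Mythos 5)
Your proposal is correct and follows essentially the same route as the paper, which states Theorem~\ref{c252} as an immediate consequence of the substitution \eqref{eigen22}, the relation $e^{-i\nu\tau}=e^{-i\theta}$, and the unique solvability of $\tilde G=0$ in Theorem~\ref{cha2}, without writing out a separate proof. The only step you add that the paper leaves implicit is the exclusion of $\nu=0$; your appeal to global asymptotic stability is slightly informal there (stability alone does not preclude a zero eigenvalue), but the conclusion is standard, since the linearized operator at $\tau=0$ is $L_\la-\la e^{2\alpha m(x)}u_\la$ with $L_\la u_\la=0$, so its principal eigenvalue is strictly negative.
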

The simplicity of $i\nu$ and the transversality
condition can also be derived as in Lemma \ref{thm34}, Theorems \ref{thm34a} and \ref{thm35}, and we also omit the proof here. Therefore, for case II, we also derive the existence of Hopf bifurcation.
\begin{theorem}
Assume that $m(x)$ satisfies assumption $\mathbf{(A_3)}$. Then, for $\lambda\in(0,\tilde \lambda^*]$, where $0<\tilde \la^*\ll 1$, there exists a sequence $\{\tau_n\}_{n=0}^\infty$ such that
the positive steady state $u_\lambda$ of Eq. \eqref{delay2}
is locally asymptotically stable when $\tau\in[0,\tau_{0})$,
unstable when $\tau\in(\tau_{0},\infty)$, and system \eqref{delay2} occurs Hopf bifurcation at the positive steady state $u_{\la}$ when $\tau=\tau_n$,
$(n=0,1,2,\cdots)$.
\end{theorem}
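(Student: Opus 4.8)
The plan is to mirror the analysis of Section 2 verbatim, now anchored at the degenerate value $\la=0$ (where $\la_*(\alpha)=0$ and the principal eigenfunction is the constant $\overline m$) rather than at $\la_*>0$. Theorem \ref{c252} already supplies, for each $\la\in(0,\tilde\la^*]$, the unique family of purely imaginary eigenvalues $i\nu_\la=i\la h_\la$ of $A_\tau(\la)$, together with the eigenfunction $\psi_\la=r_\la\overline m+\la z_\la$ and the critical delays $\tau_n=(\theta_\la+2n\pi)/\nu_\la$. It therefore remains to establish three ingredients: (i) each $i\nu_\la$ is a \emph{simple} eigenvalue of $A_{\tau_n}(\la)$; (ii) the transversality condition $\frac{d}{d\tau}\mathcal{R}e[\mu(\tau_n)]>0$ holds; and (iii) the number of eigenvalues with positive real part increases by $2$ as $\tau$ crosses each $\tau_n$. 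Given (i)--(iii), the stability/instability dichotomy and the occurrence of Hopf bifurcation follow exactly as in Theorems \ref{thm36} and \ref{thm37}, the latter via the local Hopf bifurcation theorem for partial functional differential equations of \cite{wu1996theory}.

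The one place where a genuine computation is required, and the main obstacle, is the analogue of Lemma \ref{thm34}: the quantity
\begin{equation*}
S_n(\la)=\int_\Om e^{\alpha m(x)}\psi_\la^2\,dx-\la\tau_n e^{-i\theta_\la}\int_\Om e^{2\alpha m(x)}u_\la\psi_\la^2\,dx
\end{equation*}
must not vanish. I would evaluate its limit as $\la\to 0$ using Theorem \ref{cha2} and \eqref{imd}, namely $\psi_\la\to\overline m$, $\theta_\la\to\pi/2$, $u_\la\to\overline m$ in $C(\cOm)$, and $\la\tau_n=(\theta_\la+2n\pi)/h_\la\to(\pi/2+2n\pi)/h_0$. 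Using $h_0=\left(\int_\Om m e^{\alpha m}dx\right)/\left(\int_\Om e^{\alpha m}dx\right)$ from \eqref{hthe2} together with the defining identity $\overline m\int_\Om e^{2\alpha m}dx=\int_\Om m e^{\alpha m}dx$, the second integral simplifies so that
\begin{equation*}
\lim_{\la\to 0}S_n(\la)=\left[1+i\left(\frac{\pi}{2}+2n\pi\right)\right]\overline m^2\int_\Om e^{\alpha m(x)}dx\ne 0,
\end{equation*}
exactly as in \eqref{eq31}. By continuity $S_n(\la)\ne 0$ for all sufficiently small $\la>0$.

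With $S_n(\la)\ne 0$ in hand, step (i) is immediate: the argument of Theorem \ref{thm34a} --- assuming $\phi_1\in\mathscr{N}[A_{\tau_n}(\la)-i\nu_\la]^2$, pairing the resulting identity against $\overline\psi_\la$, and invoking $\Delta(\la,-i\nu_\la,\tau_n)\overline\psi_\la=0$ --- produces the factor $S_n(\la)$ and forces the constant $a=0$, so the generalized eigenspace collapses to $\mathscr{N}[A_{\tau_n}(\la)-i\nu_\la]$. Step (ii) follows as in Theorem \ref{thm35}: differentiating the defining relation \eqref{eq34} in $\tau$ and pairing with $\psi_\la$ yields the same formula \eqref{eq35} for $d\mu(\tau_n)/d\tau$, and the corresponding limit
\begin{equation*}
\lim_{\la\to 0}\frac{1}{\la^2}\frac{d\mathcal{R}e[\mu(\tau_n)]}{d\tau}=\frac{h_0^2}{\lim_{\la\to 0}|S_n(\la)|^2}\left(\overline m^2\int_\Om e^{\alpha m(x)}dx\right)^2>0
\end{equation*}
is strictly positive, so transversality holds for all small $\la>0$.

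Finally, step (iii) is the counting argument of Theorem \ref{thm36}: at $\tau=0$ the steady state $u_\la$ is stable (the no-delay system being globally asymptotically stable, as recalled just before \eqref{imd}), so $A_0(\la)$ has no eigenvalue with positive real part; each crossing at $\tau_n$ then adds a conjugate pair moving rightward by (ii), giving exactly $2(n+1)$ unstable eigenvalues for $\tau\in(\tau_n,\tau_{n+1}]$. Hence $u_\la$ is locally asymptotically stable for $\tau\in[0,\tau_0)$ and unstable for $\tau>\tau_0$, and the simplicity together with transversality verified above let us invoke the Hopf bifurcation theorem of \cite{wu1996theory} at each $\tau_n$. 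I expect the only real subtlety to be the bookkeeping in the $S_n$ limit --- in particular keeping straight the two weights $e^{\alpha m}$ and $e^{2\alpha m}$ and the identity defining $\overline m$ --- with everything else being a direct transcription of Section 2 under the replacements $\la_*\mapsto 0$ and $\phi\mapsto\overline m$.
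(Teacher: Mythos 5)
Your proposal is correct and follows exactly the route the paper intends: the paper itself omits the proof of this theorem, stating only that the simplicity of $i\nu_\la$, the transversality condition, and the eigenvalue count carry over from Lemma \ref{thm34} and Theorems \ref{thm34a}--\ref{thm37} under the replacements $\la_*\mapsto 0$ and $\phi\mapsto\overline m$. Your explicit evaluation of $\lim_{\la\to 0}S_n(\la)$ (using $\overline m\int_\Om e^{2\alpha m}dx=\int_\Om m e^{\alpha m}dx$ and the formula for $h_0$) and of the transversality limit fills in precisely the computations the paper leaves to the reader, and both check out.
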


\end{document}